  \tikzset{mylabel/.style  args={at #1 #2  with #3}{
    postaction={decorate,
    decoration={
      markings,
      mark= at position #1
      with  \node [#2] {#3};
 } } } }
\let\@fnsymbol\@arabic
\theoremstyle{plain}
\newtheorem{theorem}{\bf Theorem}[section]
\newtheorem{example}[theorem]{\bf Example}
\newtheorem{conjecture}[theorem]{Conjecture}
\newtheorem{corollary}[theorem]{Corollary}
\newtheorem{lemma}[theorem]{Lemma}
\newtheorem{proposition}[theorem]{Proposition}
\newtheorem{remark}[theorem]{Remark}
\theoremstyle{definition}
\newtheorem{definition}[theorem]{Definition}
\newtheorem*{theorem*}{\bf Theorem}
\newcommand{\init}{\operatorname{in} }
\newcommand{\chara}{\operatorname{char} }
\newcommand{\N}{\mathbb{N}}
\newcommand{\PP}{\mathbb{P}}
\newcommand{\VV}{\mathbb{V}}
\newcommand{\Proj}{\operatorname{Proj} }
\newcommand{\mm}{\mathfrak{m}}
\renewcommand{\O}{\mathcal{O}}
\definecolor{mypink}{RGB}{215, 5, 234}
 \newcommand{\mnf}{\mathrm{mnf}}
\newcommand{\supp}{\operatorname{supp}}
\begin{document}

\title{Smooth Herzog projective curves}
\author{Hang Huang}
\email{hhuang235@tamu.edu}
\address{Math Department, Texas A\&M University, USA} 
\author{Yevgeniya Tarasova}
\email{ytarasov@umich.edu}
\address{Math Department, University of Michigan, USA} 
\author{Matteo Varbaro} 
\email{matteo.varbaro@unige.it}
\address{Dipartimento di Matematica, Universit\'a di Genova, Italy} 
\author{Emily E. Witt}
\email{witt@ku.edu}
\address{Department of Mathematics, University of Kansas, USA}

\begin{abstract}
    In this paper, we prove that smooth projective curves admitting a squarefree Gr\"obner degeneration have genus 0.
\end{abstract}
\date{}

\maketitle

\section{Introduction}
Let $S=K[X_0,\dots,X_n]$ be a polynomial ring over a field $K$, and $I\subseteq S$ an ideal. We say that $I$ is a {\it Herzog ideal} if there exists a monomial order $<$ such that $\init_<(I)$ is squarefree. In this paper, we are interested in this property in the case that $I$ is homogeneous (with respect to the standard grading), so that we can consider the projective variety $\VV(I)$ it defines in $\PP^n$. We say that $\VV(I)\subseteq \PP^n$ is a {\it Herzog projective variety} (with respect to the given embedding) precisely if $I\subseteq S$ is a Herzog  ideal. It is also useful to call a projective variety $X$ a {\it Herzog projective variety} if it is a Herzog projective variety with respect to some embedding.

The class of Herzog ideals/projective varieties is largely populated:  It includes Grassmannians, generic, generic symmetric and generic Hankel determinantal varieties, generic Pfaffian varieties, matrix Schubert varieties, rational normal scrolls, binomial edge ideals, and many more. 
One motivation for studying this entire class simultaneously is described by Herzog's conjecture, resolved in \cite{CoVa}, saying that the connection between $I$ and $\init_<(I)$ is much tighter than usual if $\init_<(I)$ is squarefree. Moreover, if $K$ has positive characteristic, there is a strong connection--still not completely understood--between Herzog ideals and $F$-singularities; see \cite{KoVa}.

The main result of this paper is the following theorem.

\begin{theorem}\label{maintintro}
A connected Herzog projective curve, smooth over $K$, has genus $0$.
\end{theorem}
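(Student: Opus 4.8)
The plan is to reduce, via the main theorem of \cite{CoVa} and Hochster's formula, to a combinatorial statement about the squarefree initial ideal, and then to exclude the bad combinatorics using smoothness. So let $I\subseteq S$ be a Herzog ideal with $C=\VV(I)\subseteq\PP^n$ our curve and $J:=\init_<(I)=I_\Delta$ squarefree, so that $S/J=K[\Delta]$ is the Stanley--Reisner ring of a simplicial complex $\Delta$; since $\dim S/I=2$, $\Delta$ is a graph. A squarefree initial ideal is radical, hence so is $I$; and since $C$ is a reduced curve this forces $I$ to be saturated, so $S/I$ is the homogeneous coordinate ring of $C$ and $\widetilde{S/I}\cong\O_C$. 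After replacing $\PP^n$ by the linear span of $C$ we may assume $C$ is nondegenerate, and then $\Delta$ has vertex set exactly $\{0,\dots,n\}$. (By \cite{CoVa} one also has $\depth S/I=\depth K[\Delta]$, so $C$ is arithmetically Cohen--Macaulay with $h$-vector that of $K[\Delta]$; this is orientation but not strictly needed.)

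Next I would identify the genus with a Betti number of $\Delta$. The exact sequence comparing $S/I$ with $\bigoplus_jH^0(\O_C(j))$ gives $H^1(C,\O_C)\cong H^2_\mm(S/I)_0$, and by \cite{CoVa} we have $\dim_KH^2_\mm(S/I)_0=\dim_KH^2_\mm(K[\Delta])_0$, which by Hochster's formula equals $\dim_K\tilde H^1(\Delta;K)$. For a graph $\Delta$ this reduced cohomology vanishes precisely when $\Delta$ is a forest. Since $\dim_KH^1(C,\O_C)=0$ is equivalent to $C$ having genus $0$, the theorem is reduced to the assertion: \emph{the graph $\Delta$ contains no cycle}.

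Now suppose $\Delta$ has a cycle, and let $\mathcal G=\{g_F\}$ be the reduced Gr\"obner basis of $I$; its leading monomials $X_F$ run over the minimal non-faces $F$ of $\Delta$ (the non-edges, of size $2$, and the triangles, of size $3$), and every non-leading monomial of each $g_F$ is supported on a face of $\Delta$. Relabel the variables so that $X_0>\dots>X_n$. Since a pure power $X_0^d$ is never in $I_\Delta$, no element of $I$ has leading term a power of $X_0$, so every $g\in I$ vanishes at $e_0:=[1:0:\dots:0]$; thus $e_0\in C$. Analyzing the $g_F$ at $e_0$, one aims to show that the presence of a cycle forces one of the following: either (a) some $g_F$ of degree $2$ or $3$ is a product one of whose factors is a linear form, whence $C$---being integral---either lies in a hyperplane (against nondegeneracy) or satisfies a polynomial of degree $<|F|$ whose leading monomial is supported on a face of $\Delta$ (against $\init_<(I)=I_\Delta$); or (b) the linear parts at $e_0$ of the $g_F$ span a subspace of $\mm_{e_0}/\mm_{e_0}^2$ of dimension $<\codim C$, so $C$ has a tangent space of dimension $\ge2$ at $e_0$, against smoothness. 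Either alternative is a contradiction, so $\Delta$ is a forest and $C$ has genus $0$.

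The main obstacle is this last step: which monomials may occur below $X_F$ depends delicately on the term order---compare the behaviour of the largest variable under a lexicographic order with that of the smallest variable under degree reverse lexicographic---so the dichotomy (a)/(b) must be established for \emph{every} monomial order. The most promising route I see is to first peel off the leaves of $\Delta$ (which changes neither $b_1(\Delta)$ nor, suitably, the smoothness question), reducing to a graph of minimum degree at least $2$, and then to run a case analysis on the positions, within a shortest cycle, of the largest and smallest variables. I expect proving that a cycle in $\Delta$ is incompatible with smoothness of $C$, uniformly in $<$, to be the technical heart of the argument.
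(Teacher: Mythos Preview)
Your reduction to ``$\Delta$ is a tree'' via \cite{CoVa} and Hochster is correct and matches the paper's Proposition~\ref{pequiv}. Your observation that $e_0=[1:0:\cdots:0]\in C$, and that if $0$ has at least two neighbors then $e_0$ is singular, is exactly Theorem~\ref{thm0}, which the paper quotes from \cite{CDV} as the base of its induction.

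The gap is the step you flag yourself. When $0$ is a leaf of $\Delta$, the point $e_0$ is \emph{smooth} on $C$ (the ``if'' half of Theorem~\ref{thm0}), so your dichotomy (a)/(b) does not occur at $e_0$ and you must look elsewhere. Your suggestion to ``peel off the leaves'' is the right move, but the parenthetical ``which changes neither $b_1(\Delta)$ nor, suitably, the smoothness question'' is exactly where all the work lies. On the algebraic side, removing the leaf $0$ means projecting $C$ from $e_0$ onto $\{X_0=0\}$; one must then prove (i) that the projection $C'$ again has squarefree initial ideal $I_{\Delta|_{[n]}}$, and (ii) that the singularity of $C'$ found by induction lifts to a singularity of $C$. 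Neither is automatic: projection from a point on a smooth curve can \emph{create} singularities. The paper proves (i) as Lemma~\ref{l1} via a careful $S$-polynomial analysis under a ``$0$-reduced'' hypothesis weaker than reducedness (needed because the argument iterates through coordinate changes that destroy reducedness). For (ii), the paper does not show $C'$ smooth; instead (Lemma~\ref{l2} and the two Cases of Theorem~\ref{thm:main}) it tracks the specific inductively-produced singular point $P_a'$, where $a$ is the smallest vertex surviving all leaf removals: either $X_0X_a\in\supp(g_{0j})$ for some $j$, and then $P_a'$ has a unique preimage at which $C$ is singular, or else a further sequence of initial-ideal-preserving coordinate changes forces the line $\overline{e_0P_a}\subseteq C$, making $C$ reducible. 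So the singular point is in general $P_a$, not $e_0$, and locating it---while controlling the Gr\"obner basis through the repeated projections and coordinate changes---is the technical heart you have not yet supplied.
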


This answers, positively, a question raised in \cite{CoVa}, and later conjectured in \cite{CDV}, in the case of curves. 
This conjecture is equivalent to the following. 

\begin{conjecture}\label{maincintro}
If $X$ is a connected Herzog projective variety, smooth over $K$, then: 
\[H^0(X,\O_X)=K, \ \ \ H^1(X,\O_X)=\cdots =H^{\dim X}(X,\O_X)=0.\]
\end{conjecture}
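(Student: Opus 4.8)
The plan is to convert Conjecture~\ref{maincintro}, via the square-free Gr\"obner degeneration, into a question in combinatorial topology, and then to settle that question using smoothness. Fix an embedding $X\hookrightarrow\PP^n$, let $I=I(X)\subseteq S$ be the (prime, since $X$ is integral) homogeneous ideal of $X$, and fix a monomial order $<$ with $J:=\init_<(I)$ square-free. Write $S/J=K[\Delta]$ for a simplicial complex $\Delta$ on $\{0,\dots,n\}$ and $Y:=\VV(J)\subseteq\PP^n$ for the associated reduced union of coordinate linear subspaces, so $\dim\Delta=\dim X=:d$ and $\Delta$ is connected (classically, the initial complex of a homogeneous prime ideal is connected in codimension one). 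The Gr\"obner degeneration exhibits $X$ and $Y$ as the general and special fiber of a flat family over $\mathbb A^1_K$. By the main theorem of \cite{CoVa}, $H^j_\mm(S/I)$ and $H^j_\mm(S/J)$ have equal Hilbert functions for all $j$; feeding this into the standard exact sequences relating the graded local cohomology of a standard graded ring $R$ to the sheaf cohomology of $Z:=\Proj R$ (namely $H^i(Z,\O_Z(m))\cong H^{i+1}_\mm(R)_m$ for $i\ge1$, and the four-term sequence computing $H^0$), applied to $R=S/I$ and $R=S/J$, gives $\dim_K H^i(X,\O_X)=\dim_K H^i(Y,\O_Y)$ for every $i\ge0$; this step is reversible, so nothing is lost.

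Next, Hochster's formula identifies the internal-degree-$0$ part of $H^{i+1}_\mm(K[\Delta])$ with the reduced cohomology $\widetilde H^i(\Delta;K)$. Hence $H^0(Y,\O_Y)=K$ (as $\Delta$ is connected) and $H^i(Y,\O_Y)\cong\widetilde H^i(\Delta;K)$ for $i\ge1$, which vanishes automatically for $i>d$. Combining with the previous paragraph, $H^0(X,\O_X)=K$ holds unconditionally, and Conjecture~\ref{maincintro} for $X$ becomes \emph{equivalent} to the combinatorial statement
\[\widetilde H^i(\Delta;K)=0\qquad\text{for all }1\le i\le d,\]
i.e.\ (since $\Delta$ is connected) to $\Delta$ being $K$-acyclic. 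For $d=1$ this says precisely that the graph $\Delta$ is a tree, which is Theorem~\ref{maintintro}. Note that $K$-acyclicity genuinely fails for singular Herzog varieties --- e.g.\ $J=(X_0X_1,X_2X_3)$ gives a $4$-cycle with $\widetilde H^1\ne0$ --- so smoothness must be used decisively; conversely there is built-in rigidity, in that a "too simple" square-free initial ideal already forces singularities (for instance no smooth hypersurface has $(X_0\cdots X_{d+1})$ as its initial ideal, since the common vanishing of the admissible monomials at a coordinate point obstructs smoothness there).

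It remains to deduce $K$-acyclicity of $\Delta$ from smoothness of $X$, and this is the crux. I would pursue three complementary routes. (a) \emph{Hodge theory} over $\CC$: feed the one-parameter degeneration into the Clemens--Schmid sequence. Every stratum of $Y$, and every multiple intersection of its components, is a projective space, so the $E_1$-page of the Mayer--Vietoris/nerve spectral sequence computing $H^\bullet(Y,\CC)$ is of Hodge--Tate type; consequently the Hodge piece $\mathrm{Gr}^0_F H^i(Y,\CC)=H^i(Y,\O_Y)$ is concentrated on the nerve differentials, i.e.\ equals $\widetilde H^i(\Delta;\CC)$, and one wants the monodromy weight filtration on the limit --- governed by the smooth general fiber --- to annihilate these classes. (b) \emph{Induction on $d$ via hyperplane sections}: a general section $X'=X\cap H$ is smooth, connected (for $d\ge2$) and --- provided one first checks that the Herzog property descends to generic hyperplane sections --- again Herzog of dimension $d-1$; then $0\to\O_X(-1)\to\O_X\to\O_{X'}\to0$, together with the inductive vanishing of $H^i(\O_{X'})$ for $1\le i\le d-1$ and Kodaira-type vanishing of $H^i(\O_X(-1))$ for $i<d$, yields $H^i(\O_X)=0$ for $1\le i\le d-1$, leaving only the top group $H^d(X,\O_X)=H^0(X,\omega_X)^\vee$ to be treated separately. (c) \emph{A characteristic-free route}: use smoothness of $X$ to pin down $\depth$, $\reg$ and the fine structure of $H^\bullet_\mm(S/I)$, transport it to $S/J$ through \cite{CoVa} (and, in positive characteristic, the $F$-singularity dictionary of \cite{KoVa}), and read off acyclicity of $\Delta$ via Reisner's criterion and connectivity bookkeeping --- this must go strictly beyond Cohen--Macaulayness of $\Delta$, which is too weak (spheres are Cohen--Macaulay).

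The main obstacle I anticipate is exactly routes (b)/(c) in dimension $\ge2$. Because the hypothesis is over an arbitrary field $K$, the decisive argument cannot genuinely rest on Hodge theory, and the Euler-characteristic/cycle-rank bookkeeping that disposes of the curve case (Theorem~\ref{maintintro}) has no evident higher-dimensional analogue, while the hyperplane-section induction leaves the top cohomology $H^d(\O_X)$ untouched. Identifying the right characteristic-free reason that the initial complex of a smooth variety carries no reduced cohomology is where the real work lies, and is why Conjecture~\ref{maincintro} remains open beyond the case of curves.
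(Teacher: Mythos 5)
The statement you are proving is a \emph{conjecture} in the paper: the authors establish it only for curves (Theorem \ref{maintintro}), and explicitly leave the general case open. Your first two paragraphs are correct and coincide with the paper's Proposition \ref{pequiv}: using \cite{CoVa} to transfer the graded local cohomology of $S/I$ to $S/\init_<(I)$, and Hochster's formula, the conjecture is equivalent to the $K$-acyclicity of the initial complex $\Delta$ (and $H^0(X,\O_X)=K$ does follow from connectedness of $\Delta$, which is Kalkbrener--Sturmfels). But that equivalence is only the setup. The crux --- deducing $\widetilde H^i(\Delta;K)=0$ for $1\le i\le d$ from smoothness of $X$ --- is exactly what you do not supply: route (a) is restricted to $\CC$ and is not carried out (you do not show the weight/monodromy argument actually kills $\widetilde H^i(\Delta;\CC)$); route (b) leaves $H^d(X,\O_X)$ untouched, as you note, and also requires the unverified claim that the Herzog property passes to generic hyperplane sections; route (c) is a wish list. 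You candidly state the conjecture remains open beyond curves, so by your own account this is not a proof.

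Even for the one case that is actually settled, your argument is circular relative to the paper: you reduce $d=1$ to ``$\Delta$ is a tree, which is Theorem \ref{maintintro}'' --- but Theorem \ref{maintintro} \emph{is} that statement, and its proof is the entire technical content of the paper. It is not ``Euler-characteristic/cycle-rank bookkeeping'': the authors run an induction on the collapse number $\ell(\Delta)$, projecting $X$ from the coordinate point $P_0$ attached to a free vertex of $\Delta$, proving (Lemma \ref{l1}) that a $0$-reduced Gr\"obner basis restricts to a Gr\"obner basis of $I\cap K[X_1,\dots,X_n]$, controlling fibers of the projection (Lemma \ref{l2}), and finally exhibiting, after carefully chosen changes of variables preserving $\init_<(I)$, either a singular point $P_a$ with $a$ the top vertex of the $2$-core $W(\Delta)$ or a line $\overline{P_0P_a}$ forcing reducibility. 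None of that machinery appears in your proposal, so both the general conjecture and its curve case are left unproved here.
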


Another point of view, which indeed is the one on which our proof is focused, is based on the concept of Gr\"obner smoothability: an equivalent formulation of Theorem \ref{maintintro} is that the only Gr\"obner smoothable graphs are trees. If one is interested only in finding a smoothing (possibly not Gr\"obner), the story is completely different: a 4-cycle is obviously smoothable, because its Stanley-Reisner ideal is a complete intersection--and in general, many combinatorial manifolds are smoothable (see \cite{AlCh})--while according to Conjecture \ref{maincintro},- they are not Gr\"obner smoothable (see Proposition \ref{pequiv}). 

\begin{remark}
The coordinate ring of a projective Herzog variety is $F$-injective in positive characteristic and DuBois in characteristic zero by \cite[Corollary 4.11]{KoVa}, using \cite{Sch} for the characteristic zero case. However, there are plenty of smooth projective curves of positive genus with $F$-injective/DuBois coordinate rings, so Theorem \ref{maintintro} implies that the property of being Herzog is much stronger than that of being $F$-injective/DuBois.

Moreover, note that the implication ``$X$ reduced and connected $\implies$ $H^0(X,\O_X)=K$" is not true in general if $K$ is not algebraically closed. However, this implication is true if $X$ is additionally assumed to be a Herzog projective variety by \cite{Va09}.
\end{remark}

\subsection*{Acknoledgements}
H.H. is supported by NSF grant DMS-2302375; M.V. is supported by PRIN~2020355B8Y, by HubLife Science –
Digital Health (LSH-DH) PNC-E3-2022-23683267 - Progetto DHEAL-COM – CUP:D33C22001980001, and by MIUR Excellence Department Project CUP D33C23001110001; E.W. is supported by NSF CAREER DMS-1945611; all authors were supported by NSF grant DMS-1928930 and by Alfred P. Sloan Foundation grant G-2021-16778, while in residence at SLMath/MSRI, Berkeley, during the Spring 2024 Commutative Algebra program.

\section{Notations and preliminary results}

Let $S=K[X_0,\dots,X_n]$ be a polynomial ring over a field $K$, with the standard graded structure; i.e., $\deg(X_i)=1$ for all $0 \leq i \leq n$. Given a polynomial $f\in S$, we use $\supp(f)$ to denote the set of monomials occurring in $f$ with a nonzero coefficient. Let $I \subseteq S$ be a homogeneous Herzog ideal, and fix a monomial order $<$ such that $\init_<(I)$ is squarefree, so that $\init_<(I)=I_{\Delta}$ is the Stanley-Reisner ideal of a simplicial complex $\Delta$ on the vertex set $[n]_0:=\{0,\ldots ,n\}$. In particular, note that $I$ is radical. Let $X=\VV(I)\subseteq \PP^n$  be the projective variety defined by $I$. We can, and will, assume throughout that $X_0>X_1>\cdots >X_n$. 

As mentioned in the introduction, a question was raised in \cite[Question 3.17]{CoVa} that was later formulated as a conjecture in \cite[Conjecture 2]{CDV} that is equivalent to Conjecture \ref{maincintro}. We start with a result that implies that the two conjectures are indeed equivalent. Before we proceed, recall that a simplicial complex $\Gamma$ on $[n]_0$ is {\it acyclic over $K$} if $\widetilde{H}^i(\Gamma;K)=0$ for all $0 \leq i \leq \dim X$, where $\widetilde{H}^*$ denotes reduced simplicial homology. 

\begin{proposition}\label{pequiv}
With the above notation, if the Herzog projective variety $X\subseteq \PP^n$ is smooth over $K$, then the following are equivalent:
\begin{enumerate}
\item $S/I$ is a rational singularity if  $\chara(K)=0$ and $F$-rational if $\chara(K)>0$.
\item $S/I$ is Cohen-Macaulay and has negative $a$-invariant.
\item $S/I_{\Delta}$ is Cohen-Macaulay and has negative $a$-invariant.
\item $\Delta$ is acyclic over $K$.
\item $H^0(X,\O_X)=K$ and $H^i(X,\O_X)=0$ for all $1 \leq 1 \leq \dim X$.
\end{enumerate} 
\end{proposition}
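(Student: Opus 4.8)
The statement is a chain of equivalences about a smooth projective variety $X = \VV(I)$ with $I$ a homogeneous Herzog ideal and $\init_<(I) = I_\Delta$ squarefree. I would prove the equivalences by combining three ingredients: (a) the theory connecting a graded ring to its initial ideal under a Gröbner degeneration — in particular that $S/I$ and $S/I_\Delta$ share the same Hilbert function, and more importantly that Cohen-Macaulayness and the $a$-invariant are controlled by the initial ideal; (b) the special features of squarefree initial ideals coming from \cite{CoVa} and \cite{KoVa}; and (c) Serre duality / local cohomology dictionary relating $H^i(X,\O_X)$ to the graded local cohomology $H^{i+1}_\mm(S/I)$.

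**Main line of argument.** First, $(2) \Leftrightarrow (3)$: since $\init_<(I) = I_\Delta$ is a Gröbner degeneration, $S/I_\Delta$ Cohen-Macaulay implies $S/I$ Cohen-Macaulay, and the two rings have the same Hilbert series, hence the same $a$-invariant (which for a Cohen-Macaulay graded ring is read off from the numerator of the Hilbert series and the Krull dimension); the reverse implication for CM-ness is precisely where the Herzog hypothesis enters — by \cite{CoVa}, when the initial ideal is squarefree the ring $S/I$ is Cohen-Macaulay if and only if $S/I_\Delta$ is, so this equivalence is essentially citing that theorem. Next, $(3) \Leftrightarrow (4)$: this is Reisner's criterion together with the Stanley description of the $a$-invariant of a Stanley-Reisner ring; $S/I_\Delta$ is Cohen-Macaulay iff $\Delta$ is Cohen-Macaulay over $K$, and then negativity of the $a$-invariant corresponds exactly to vanishing of $\widetilde H^{\dim\Delta}(\Delta;K)$, while CM-ness already forces all lower reduced cohomology (of $\Delta$ and its links) to vanish; combining, $(3)$ holds iff $\widetilde H^i(\Delta;K) = 0$ for all $i \le \dim\Delta = \dim X$, i.e. $\Delta$ is acyclic over $K$. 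Then $(2) \Leftrightarrow (5)$: via the exact sequence relating local cohomology of $S/I$ to sheaf cohomology of $X$, one has $H^i(X,\O_X) \cong [H^{i+1}_\mm(S/I)]_0$ for $i \ge 1$ and a short exact sequence computing $H^0(X,\O_X)$; since $\dim X = \dim S/I - 1$, the condition $(5)$ says the graded pieces in degree $0$ of all the intermediate and top local cohomology modules vanish. I would then argue that, given the Herzog structure, vanishing in degree $0$ propagates: using that $S/I$ is (by \cite{CoVa}, since the initial ideal is squarefree) already known to satisfy $H^i_\mm(S/I)_j = 0$ for $j > 0$ in the relevant range, and that $X$ smooth plus \cite{Va09} gives $H^0(X,\O_X) = K$ once $S/I$ is reduced and connected (which it is), the content of $(5)$ collapses to: $S/I$ is Cohen-Macaulay and $[H^{\dim S/I}_\mm(S/I)]_0 = 0$, i.e. negative $a$-invariant — which is $(2)$. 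Finally $(1) \Leftrightarrow (2)$: in characteristic $0$, $X$ smooth means $S/I$ has rational singularities iff it is Cohen-Macaulay with negative $a$-invariant — this is a standard criterion (Flenner/Watanabe-type, via the graded canonical module) for the cone over a smooth projective variety; in positive characteristic the analogous statement for $F$-rationality (Cohen-Macaulay plus, roughly, negative $a$-invariant under the smoothness of $X = \Proj$) is the Fedder/Watanabe/Smith circle of results, and here one also uses the Herzog hypothesis via \cite{KoVa} to pass between characteristics or to ensure the $F$-rational comparison is available.

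**Where the real work is.** The main obstacle is the implication from $(2)$ back to $(3)$, i.e. going \emph{down} the Gröbner degeneration: $S/I$ Cohen-Macaulay forcing $S/I_\Delta$ Cohen-Macaulay is false for general ideals, so this step must genuinely use that $\init_<(I)$ is squarefree — this is exactly the theorem of \cite{CoVa} resolving Herzog's conjecture, and citing it correctly (with the matching statement about the $a$-invariant / local cohomology in all degrees, not just degree $0$) is the crux. The second delicate point is the equivalence with $(5)$: one must be careful that $(5)$ as stated — vanishing only of the degree-$0$ strands of sheaf cohomology — is equivalent to the full statement $(2)$, and this again relies on the Herzog-specific vanishing $H^i_\mm(S/I)_j = 0$ for $j>0$ (equivalently Castelnuovo-Mumford regularity bounds coming from the squarefree initial ideal) rather than being a formal consequence of $X$ being smooth. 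The characteristic-dependent equivalence $(1) \Leftrightarrow (2)$ is standard once the right references are invoked and does not pose a serious difficulty beyond bookkeeping between the two characteristics.
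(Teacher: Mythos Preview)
Your overall architecture is close to the paper's, and your handling of $(1)\Leftrightarrow(2)$, $(3)\Rightarrow(2)$, and $(2)\Rightarrow(5)$ is fine. But there is a genuine gap in the implications $(4)\Rightarrow(3)$ and $(5)\Rightarrow(2)$: you never control the \emph{negative}-degree pieces of local cohomology, and without that neither implication goes through.

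Concretely, for $(4)\Rightarrow(3)$ you invoke Reisner's criterion, but Reisner runs the wrong way here: Cohen--Macaulayness of $S/I_\Delta$ is equivalent to vanishing of reduced cohomology of \emph{all links} of $\Delta$, not just of $\Delta$ itself. Acyclicity of $\Delta$ (condition~(4)) says nothing about links, so on its own it cannot force $S/I_\Delta$ to be Cohen--Macaulay. Likewise, for $(5)\Rightarrow(2)$ you correctly observe that the Stanley--Reisner structure plus \cite{CoVa} kills $H^j_\mm(S/I)_\ell$ for $\ell>0$, and $(5)$ kills degree~$0$; but you say nothing about $\ell<0$, and without that you cannot conclude $S/I$ is Cohen--Macaulay.

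The missing idea, which is exactly where the smoothness hypothesis enters, is the following bootstrap: since $X$ is smooth, $S/I$ has an isolated singularity and is therefore generalized Cohen--Macaulay, so $H^j_\mm(S/I)_\ell=0$ for $\ell\ll 0$ and $j<\dim S/I$. By \cite[Theorem~1.3]{CoVa} the same holds for $S/I_\Delta$. Now the special structure of Stanley--Reisner local cohomology upgrades ``$\ell\ll 0$'' to ``all $\ell<0$'', and transferring back via \cite{CoVa} gives $H^j_\mm(S/I)_\ell=H^j_\mm(S/I_\Delta)_\ell=0$ for every $\ell<0$ and $j<\dim S/I$. With this in hand, both gapped implications close immediately: acyclicity of $\Delta$ (degree~$0$) plus automatic vanishing in positive degrees plus this negative-degree vanishing gives $(3)$; and similarly $(5)$ plus positive- and negative-degree vanishing gives $(2)$. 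The paper establishes this preliminary fact first and then runs the cycle $(2)\Rightarrow(5)\Rightarrow(4)\Rightarrow(3)\Rightarrow(2)$.
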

\begin{proof}
Note that, since $X$ is smooth over $K$, $S/I$ is generalized Cohen-Macaulay; namely $H_{\mm}^j(S/I)_\ell=0$ for all $\ell\ll 0$ and $j<\dim S/I$, where $\mm=(X_0,\ldots ,X_n)\subseteq S$. Using \cite[Theorem 1.3]{CoVa}, $H_{\mm}^j(S/I_{\Delta})_\ell=0$ for all $\ell\ll 0$ and $j<\dim S/I=\dim S/I_{\Delta}$ as well. But then, since $S/I_{\Delta}$ is a Stanley-Reisner ring, $H_{\mm}^j(S/I_{\Delta})_\ell=0$ for all $\ell< 0$ and $j<\dim S/I_{\Delta}$, and so $H_{\mm}^j(S/I)_\ell=0$ for all $\ell< 0$ and $j<\dim S/I$. Therefore we have
\[H_{\mm}^j(S/I)_\ell=H_{\mm}^j(S/I_{\Delta})_\ell=0 \ \ \ \forall \ \ell < 0, \ j<\dim S/I=\\dim S/I_{\Delta}.\]

\medskip

$(1) \iff (2)$. This follows from Flenner's criterion \cite[Satz 3.1]{Fle} if $\chara(K)=0$ and its positive characteristic analog \cite{FeWa} if $\chara(K)>0$: in this case, the further assumption that $S/I$ is $F$-injective is true by \cite[Corollary 4.11]{KoVa} since $I\subseteq S$ is a Herzog ideal.

\medskip

$(2)\implies (5)$ follows because $H^i(X,\O_X)=H_{\mm}^{i+1}(S/I)_0$ for all $i>0$ and $\dim_KH^0(X,\O_X)=\dim_KH_{\mm}^{1}(S/I)_0+1$.

\medskip

$(5) \implies (4)$. This follows from \cite[Theorem 1.3]{CoVa} and Hochster's formula (see, e.g., \cite[Theorem 5.3.8]{BrHe}) since $H^i(X,\O_X)=H_{\mm}^{i+1}(S/I)_0$ for all $i>0$, $\dim_KH^0(X,\O_X)=\dim_KH_{\mm}^{1}(S/I)_0+1$, and $\widetilde{H}^i(\Delta;K)=H_{\mm}^{i+1}(S/I_{\Delta})_0$.

\medskip

$(4)\implies (3)$. Again using Hochster's formula $\widetilde{H}^i(\Delta;K)=H_{\mm}^{i+1}(S/I_{\Delta})_0$ and $H_{\mm}^{j}(S/I_{\Delta})_\ell=0$ for all $j, \ell>0$. Finally, we clearly have that $H_{\mm}^{0}(S/I_{\Delta})=0$ obviously, so the discussion at the beginning of the proof completes the argument.

\medskip

$(3)\implies (2)$ follows by semicontinuity.
\end{proof}

The formulation used in \cite{CoVa,CDV} is (3), and that given in the introduction as Conjecture \ref{maincintro} is (6).  To provide the proof in the curve case, we will use (5), with the aid of the following definition:  We say that $\Gamma$ is {\it Gr\"obner smoothable over $K$} if there exists a homogeneous ideal $J\subseteq S$ and a monomial order $<$ such that $\init_<(J)=I_{\Gamma}$ and $\VV(J)\subseteq \PP^n$ is connected and smooth over $K$.

\begin{conjecture}\label{conj:main}
If a $d$-dimensional simplicial complex $\Gamma$ is Gr\"obner smoothable over $K$, then $\Gamma$ is acyclic over $K$.
\end{conjecture}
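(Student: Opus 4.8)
The plan is to prove the curve case of Conjecture~\ref{conj:main} --- equivalently, via Proposition~\ref{pequiv}, that a connected smooth Herzog projective curve $X = \VV(J) \subseteq \PP^n$ has $H^1(X, \O_X) = 0$ and $H^0(X,\O_X) = K$ --- by a careful analysis of the simplicial complex $\Delta$ with $I_\Delta = \init_<(J)$. Since $X$ is a curve, $\dim S/J = 2$ and $\Delta$ is a $1$-dimensional simplicial complex, i.e.\ a graph $G$ on the vertex set $[n]_0$. The claim ``the only Gr\"obner smoothable graphs are trees'' (phrased in the introduction) is exactly what we want: acyclicity of a graph over $K$ means $\widetilde H^0(G;K) = 0$ (connectedness) and $\widetilde H^1(G;K) = 0$ (no cycles), so acyclic graphs are precisely trees (more precisely, forests; connectedness must be handled separately but follows from $X$ connected via \cite[Theorem 1.3]{CoVa}). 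So the real content is: if $G$ has an independent cycle, then $G = \Delta$ cannot be the initial complex of the ideal of a smooth connected curve.

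\textbf{Main strategy.} I would argue by contradiction: suppose $G$ contains a cycle, i.e.\ $\widetilde H^1(G;K) \neq 0$, and derive a contradiction with smoothness of $X$. The key numerical handle is the Hilbert polynomial: since Gr\"obner degeneration preserves the Hilbert function, $S/J$ and $S/I_\Delta$ have the same Hilbert polynomial, hence the same degree and the same arithmetic genus. For a graph $G$, the arithmetic genus of $S/I_G$ (a reduced curve, a union of lines) is controlled by the first Betti number of $G$: concretely $p_a = b_1(G) = \#E(G) - \#V(G) + (\text{number of connected components})$ when $G$ is connected this is $\#E - \#V + 1$, which counts independent cycles. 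On the smooth side, $p_a(X) = g(X) = \dim_K H^1(X,\O_X)$. So a cycle in $G$ forces $g(X) \geq 1$, and conversely $g(X) = 0$ forces $G$ to be a tree. Thus it suffices to rule out $g(X) \geq 1$; equivalently, to show directly that the Stanley--Reisner ring $S/I_G$ of the initial graph, for $G$ containing a cycle, fails condition (3) of Proposition~\ref{pequiv} in a way that is incompatible with $X$ being smooth --- but wait, that is circular, since (3) fails precisely when $g \geq 1$. So the argument must instead be: \emph{geometric}, showing that a smooth curve degenerating to a cycle-containing union of lines cannot exist, OR \emph{combinatorial}, extracting from the Herzog hypothesis a structural restriction on $G$ that forbids cycles.

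\textbf{The combinatorial route, which I expect to be the productive one.} The Herzog hypothesis says more than ``$I_\Delta$ is \emph{some} initial ideal of \emph{some} homogeneous ideal''; it says $I_\Delta = \init_<(J)$ where $\VV(J)$ is smooth. I would exploit the smoothness of $X$ to control the local structure of $J$, and then track what a Gr\"obner basis of $J$ can look like. A cleaner approach: use that $S/J$ is generalized Cohen--Macaulay (as in the proof of Proposition~\ref{pequiv}) together with \cite[Theorem 1.3]{CoVa} to transfer cohomological vanishing between $S/J$ and $S/I_G$; the obstruction is that $S/I_G$ has $H^2_\mm(S/I_G)_0 = \widetilde H^1(G;K) \neq 0$ for a graph with a cycle, and one must show this $K$-vector space \emph{persists} or creates a contradiction. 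The subtle point is that $H^2_\mm(S/J)_0$ can genuinely be nonzero for a smooth curve of positive genus, so the argument cannot be purely cohomological --- it must use that genus $0$ is forced, which I believe comes down to: \emph{every smooth connected projective curve that Gr\"obner-degenerates to a graph is a rational normal curve or a disjoint-free configuration collapsing to a tree}. Concretely, I would try to show that if $G$ has a cycle $C$ on vertices $i_1 > i_2 > \cdots > i_k > i_1$, then the edges of $C$, being non-faces, give generators $X_{i_a}X_{i_b} \in I_\Delta$ for non-adjacent pairs, and the corresponding elements of $J$ (with these leading terms) satisfy binomial-like syzygies forcing $\VV(J)$ to be singular at a coordinate-type point or to have a component of dimension $\geq 2$, contradicting that $X$ is a smooth curve.

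\textbf{Expected main obstacle.} The hard part will be turning ``$G$ has a cycle'' into a \emph{local} statement about a singularity of $\VV(J)$, because Gr\"obner degeneration is a global/flat-limit phenomenon and singularities of the special fiber need not lift. I anticipate the proof genuinely needs the smoothness input twice: once via Proposition~\ref{pequiv}(1)$\Leftrightarrow$(2) to know $S/J$ is Cohen--Macaulay \emph{iff} $g = 0$, and once to constrain the linear system / very ample divisor giving the embedding, perhaps showing that a Herzog embedding of a curve of genus $g \geq 1$ would need degree too small to be projectively normal, clashing with the squarefree-initial-ideal consequences of \cite{CoVa} (e.g.\ $S/J$ has the same depth properties as $S/I_G$). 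So the crux is a degree--genus inequality: the degree of $X$ equals $\#E(G)$, the number of vertices used is at most $n+1$, and a smooth curve of degree $e$ in $\PP^n$ that is a flat limit of the connected tree-or-cycle graph $G$ must satisfy $e \geq n + g$ with equality analysis forcing $g = 0$ when $G$ has a cycle --- verifying this inequality and its equality case is where the real work lies, and I would prioritize that step.
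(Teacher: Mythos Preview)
Your proposal does not contain a working argument. You correctly identify that the Hilbert-function route is circular: $p_a(X)=b_1(G)$ is automatic from Gr\"obner degeneration, and for $X$ smooth $p_a(X)=g(X)$, so ``$G$ is a tree'' is literally equivalent to ``$g(X)=0$''. That reduction is the \emph{statement}, not a proof. Your fallback, a degree--genus inequality, does not exist in the form you need: there is no a priori reason a smooth curve of genus $g\ge 1$ and degree $e$ in $\PP^n$ cannot satisfy $e=\#E(G)$ and $n+1=\#V(G)$ for some graph $G$ with $b_1(G)=g$. Projective normality and depth constraints coming from \cite{CoVa} do not rule this out either, since (as the paper's Remark notes) there are plenty of smooth positive-genus curves whose homogeneous coordinate rings are Cohen--Macaulay, $F$-injective, etc. So the numerical/cohomological side gives you nothing beyond the tautology.

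The paper's proof takes a completely different route, and the key idea you are missing is this: one exhibits an \emph{explicit singular point} on $X$ whenever $\Delta$ is not a tree. The starting input is Theorem~\ref{thm0} (from \cite{CDV}): the coordinate point $P_0=[1:0:\cdots:0]$ is nonsingular on $X$ if and only if $0$ is a free vertex of $\Delta$. This already handles the case $\ell(\Delta)=0$ (no free vertices). For the general case one inducts on $\ell(\Delta)$, the number of elementary collapses $\Delta$ admits. When $0$ is free, one projects $X$ from $P_0$ to $\PP^{n-1}$; Lemma~\ref{l1} shows (after $0$-reducing the Gr\"obner basis) that the closure $X'$ of the image again has squarefree initial ideal, namely $I_{\Delta'}$ with $\Delta'=\Delta_{[n]}$ and $\ell(\Delta')=\ell(\Delta)-1$. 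By induction $X'$ acquires a singularity at a specific coordinate point $P'_a$, where $a$ is the smallest vertex in the ``core'' $W(\Delta)$ left after all collapses. Lemma~\ref{l2} then lifts this singularity back to $X$ in Case~I; in Case~II a sequence of initial-ideal-preserving linear changes of variable forces the entire line $\overline{P_0P_a}$ into $X$, contradicting irreducibility. None of this is visible from Hilbert polynomials or from generic syzygy considerations; the argument is a hands-on Jacobian computation tracking how monomials in $\supp(g_\sigma)$ behave under $S$-pair reduction and coordinate change. Your remark about ``forcing $\VV(J)$ to be singular at a coordinate-type point'' is pointing in the right direction, but without the collapse invariant $\ell(\Delta)$, the projection step, and the $0$-reduced Gr\"obner basis analysis, there is no mechanism to produce that point.
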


To study the above conjecture, it is harmless to assume that $K$ is algebraically closed: indeed $\init_<(I\otimes_K\overline{K})=\init_<(I)\otimes_K\overline{K}$, and $X$ is smooth over $K$ if and only if $X\times_K\overline{K}$ is smooth over $\overline{K}$ (namely, it is nonsingular). As we will see, one can also assume that $\{v\}\in\Delta$ for any $v\in[n]_0$, i.e., that $I$ contains no linear forms:

\begin{lemma}
Given any $d\in\N$, if Conjecture \ref{conj:main} is true for all $d$-dimensional simplicial complexes $\Delta$ such that $\{v\}\in\Delta$ for all $v\in[n]_0$, then it is true for any $d$-dimensional simplicial complex.
\end{lemma}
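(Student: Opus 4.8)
Fix $d$ and assume Conjecture \ref{conj:main} holds for all $d$-dimensional complexes in which every singleton is a face. Let $\Gamma$ be an arbitrary $d$-dimensional complex, say on $[n]_0$, which is Gr\"obner smoothable over $K$: there are a homogeneous ideal $J\subseteq S=K[X_0,\dots,X_n]$ and a monomial order $<$ (with $X_0>\cdots>X_n$) so that $\init_<(J)=I_\Gamma$ and $\VV(J)\subseteq\PP^n$ is connected and smooth over $K$. The plan is to induct on $s:=\#\{v\in[n]_0 : \{v\}\notin\Gamma\}$. When $s=0$ the hypothesis applies verbatim and gives that $\Gamma$ is acyclic over $K$, so assume $s\ge 1$ and pick a vertex $v$ with $\{v\}\notin\Gamma$. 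Then $X_v\in I_\Gamma=\init_<(J)$, so $X_v$ is the leading monomial of some nonzero element of $J$; taking the element $f$ of the reduced Gr\"obner basis of $J$ with leading term $X_v$ produces a linear form $f=X_v-g\in J$ all of whose remaining monomials are $<X_v$, i.e. $g\in\langle X_{v+1},\dots,X_n\rangle_K$.

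I would then eliminate $X_v$ via the graded $K$-algebra surjection $\pi\colon S\to\bar S:=K[X_i:i\neq v]$ defined by $X_v\mapsto g$ and $X_i\mapsto X_i$ otherwise. Because $f$ is linear and $g$ does not involve $X_v$, we have $\ker\pi=(f)\subseteq J$, so $\pi$ descends to an isomorphism of graded rings $S/J\xrightarrow{\ \sim\ }\bar S/\bar J$ with $\bar J:=\pi(J)$; hence $\VV(\bar J)\subseteq\PP^{n-1}$ is again connected and smooth over $K$, and $\bar S/\bar J$ has the same Hilbert function as $S/J$. Equip $\bar S$ with the restriction $\bar<$ of $<$ to the monomials not involving $X_v$, keeping the induced order on the remaining variables. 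The heart of the argument is to verify that $\init_{\bar<}(\bar J)=I_\Gamma^{\bar S}$, where $I_\Gamma^{\bar S}$ is the Stanley--Reisner ideal of $\Gamma$ viewed as a complex on $[n]_0\setminus\{v\}$; equivalently, eliminating the ``phantom'' vertex $v$ does not disturb the combinatorial type of the degeneration.

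For this I would first note that, since $\{v\}$ is a (minimal) nonface of $\Gamma$, no minimal nonface of $\Gamma$ contains $v$, so $I_\Gamma=(X_v)+I_\Gamma^{\bar S}S$ and therefore $S/I_\Gamma\cong\bar S/I_\Gamma^{\bar S}$ as graded rings; in particular $\bar S/I_\Gamma^{\bar S}$ has the same Hilbert function as $\bar S/\bar J$. For the inclusion $I_\Gamma^{\bar S}\subseteq\init_{\bar<}(\bar J)$ I would push the reduced Gr\"obner basis $G$ of $J$ through $\pi$: each $h\in G$ with $\init_<(h)\neq X_v$ has leading monomial a minimal nonface monomial not divisible by $X_v$, and applying $\pi$ only replaces occurrences of $X_v$ by $K$-combinations of strictly smaller variables, so $\init_{\bar<}(\pi(h))=\init_<(h)$; as these monomials generate $I_\Gamma^{\bar S}$, the inclusion follows. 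Two ideals with $I_\Gamma^{\bar S}\subseteq\init_{\bar<}(\bar J)$ and equal Hilbert functions must coincide, so $\init_{\bar<}(\bar J)=I_\Gamma^{\bar S}$. Thus $\Gamma$, regarded on $[n]_0\setminus\{v\}$ (relabelled as $[n-1]_0$, adjusting variables and the order accordingly), is a $d$-dimensional complex that is Gr\"obner smoothable over $K$ and has only $s-1$ missing vertices, so by induction it is acyclic over $K$; since reduced simplicial homology is intrinsic to $\Gamma$, this is the desired conclusion. I expect the main obstacle to be precisely the equality $\init_{\bar<}(\bar J)=I_\Gamma^{\bar S}$: one must ensure that passing to the quotient ring neither lowers the leading terms coming from the genuine nonfaces nor creates new ones, and the Hilbert-function comparison above is what lets us avoid any delicate bookkeeping of the tails of a Gr\"obner basis.
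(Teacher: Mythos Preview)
Your proof is correct and follows essentially the same approach as the paper: locate a linear form in $J$ with leading term $X_v$, eliminate $X_v$ by substitution, and argue that the resulting ideal in the smaller polynomial ring still has initial ideal $I_\Gamma$ (now on $[n]_0\setminus\{v\}$). The paper is terser, simply asserting that a Gr\"obner basis of $J$ remains a Gr\"obner basis after substituting $X_v\mapsto g$; your explicit Hilbert-function comparison is exactly what makes that assertion rigorous.
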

\begin{proof}
Suppose that there exists $v\in[n]_0$ such that $\{v\}\notin\Delta$. Then $X_v\in I_{\Delta}=\init_<(I)$, so there exists a linear form 
\[\ell=X_v+\sum_{i>v}\lambda_iX_i\in I, \ \ \ \ \ \lambda_i\in K.\]
Consider $S/\ell S \cong K[X_j:j\in[n]_0\setminus \{v\}]=:S'$ and $I'=I/\ell S\subseteq S'$. The only thing to note is that $\init_{<'}(I')$, where $<'$ is the obvious restriction of $<$ to $S'$, is still equal to $I_{\Delta}$, where now $\Delta$ is considered on the vertex set $[n]_0\setminus \{v\}$: this can be seen because a Gr\"obner bases of $I$ remains a Gr\"obner bases  after substituting $X_v$ with $-\sum_{i>v}\lambda_iX_i$, since $i>v\iff X_v<X_i$. Since $S'/I'\cong S/I$, it is harmless to assume that every $v\in [n]_0$ belongs to $\Delta$, or equivalently, that $I$ does not contain any linear form.
\end{proof}

Our goal is to prove Conjecture \ref{conj:main} when $d=1$, and in view of the above lemma, we can assume that $\{v\}\in\Delta$ for all $v\in[n]_0$.  Hence from now on, we will assume that:

\begin{enumerate}
\item $K$ is algebraically closed.
\item $\Delta$ is a 1-dimensional simplicial complex, namely, a graph.
\item For all $v\in [n]_0$, $v$ is a vertex of $\Delta$. 
\end{enumerate}

In \cite{CDV}, some evidence for the validity of Conjecture \ref{conj:main} was given. In particular, we will use \cite[Theorem 4.9]{CDV}, encompassed in the statement below, as a starting point. Note that the point \[P_0=[1:0:\cdots :0]\in \PP^n\] is necessarily a point of $X$. Recall that a vertex $v$ of a graph $\Delta$ is a {\it free} vertex if it belongs  to exactly one edge.

\begin{theorem}[{Cf.\,\cite[Theorem 4.9]{CDV}}]\label{thm0}
With the above notation, assume that $X\subseteq \PP^n$ is a connected projective curve. Then $P_0$ is a nonsingular point of $X$ if and only if $0$ is a free vertex of the graph $\Delta$.
\end{theorem}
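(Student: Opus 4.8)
The plan is to reduce the nonsingularity of $P_0$ to a computation of $\dim_K\mathfrak{m}_{P_0}/\mathfrak{m}_{P_0}^2$ and to control that number through the low-degree elements of $I$ that see the vertex $0$. First I would check $P_0\in X$: since $\{0\}\in\Delta$, the monomial $X_0^d$ never lies in $I_\Delta=\init_<(I)$, and because $X_0$ is the largest variable, $X_0^d$ is the $<$-largest monomial of degree $d$; hence no $f\in I$ can have $X_0^d\in\supp(f)$, so $I\subseteq(X_1,\dots,X_n)$. Passing to the chart $X_0\neq0$ with $x_i=X_i/X_0$, $A=K[x_1,\dots,x_n]$, $J=I^{\mathrm{deh}}$, the tangent space of $X$ at $P_0$ is cut out by the degree-one parts of the elements of $J$, so $\dim_K\mathfrak{m}_{P_0}/\mathfrak{m}_{P_0}^2=n-\dim_K L$, where $L$ is their span. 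Since $\Delta$ is a graph, $I_\Delta$ — hence $I$ — is generated in degrees $\le 3$, and a short bookkeeping identifies $L$ (via $X_i\leftrightarrow x_i$) with $\langle\gamma_q:\ q\text{ a homogeneous generator of }I\rangle$, where $\gamma_q$ is the coefficient of $X_0^{\deg q-1}$ in $q$. As $X$ is a curve, $\dim_K\mathfrak{m}_{P_0}/\mathfrak{m}_{P_0}^2\ge1$ always, and $P_0$ is nonsingular exactly when it equals $1$, i.e.\ when $\dim_K L=n-1$.

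For the ``if'' direction, let $v_0$ be the unique neighbour of $0$. For each $j\neq v_0$ the pair $\{0,j\}$ is a non-edge, so $X_0X_j\in\init_<(I)$ and there is a quadric $f_j\in I\cap(X_1,\dots,X_n)$ with $\init_<(f_j)=X_0X_j$; since $X_0X_k\le X_0X_j$ holds exactly for $k\ge j$, this forces $\gamma_{f_j}=X_j+\sum_{k>j}c_{jk}X_k$. These $n-1$ forms are in row-echelon form with pivots $X_j$ ($j\neq v_0$); back-substituting from the largest index downwards expresses every $x_j$ ($j\neq v_0$) in terms of $x_{v_0}$ modulo $J+(x_1,\dots,x_n)^2$, so $\dim_K L=n-1$ and $P_0$ is nonsingular. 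Running the same argument with $v_0$ replaced by the whole neighbourhood $N(0)$ gives $\dim_K L\ge n-|N(0)|$ in general; in particular $0$ cannot be isolated (else $P_0$ would be an isolated point of the connected curve $X$).

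For the converse, assume $0$ is not free; by the last remark it has at least two neighbours and we must prove $\dim_K\mathfrak{m}_{P_0}/\mathfrak{m}_{P_0}^2\ge2$. Writing $W=\langle\gamma_{f_j}:j\notin N(0)\rangle\subseteq L$ (an $(n-|N(0)|)$-dimensional space with a reduced basis $\{X_j+\sum_{v\in N(0)}c'_{jv}X_v:\ j\notin N(0)\}$) and $V_N=\langle X_v:v\in N(0)\rangle$, the echelon shape gives $L=W\oplus(L\cap V_N)$, so $\dim_K\mathfrak{m}_{P_0}/\mathfrak{m}_{P_0}^2=|N(0)|-\dim_K(L\cap V_N)$; it suffices to prove $L\cap V_N=0$. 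Equivalently: there is no homogeneous $f\in I$ of degree $d$ with $f\equiv X_0^{d-1}\gamma\pmod{(X_1,\dots,X_n)^2}$ and $0\neq\gamma\in V_N$. The approach is a descent: if such an $f$ exists, choose one with $\init_<(f)$ minimal, set $\mu=\init_<(f)$ and $g=f-X_0^{d-1}\gamma$, and let $v^*$ be the smallest index in the support of $\gamma$ (so $v^*\in N(0)$ and $X_0^{d-1}X_{v^*}$ is the $<$-largest monomial of $f$ of that shape). If $\mu=X_0^{d-1}X_{v^*}$ then squarefreeness of $\mu$ forces $d=2$, whence $X_0X_{v^*}\in I_\Delta$, i.e.\ $\{0,v^*\}$ is a non-edge, contradicting $v^*\in N(0)$. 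Hence $\mu\in\supp(g)$, so $\mu$ is a squarefree non-face monomial divisible by at least two variables among $X_1,\dots,X_n$; one then rewrites $\mu$ as a multiple of a minimal non-face (a non-edge $X_aX_b$ or a triangle $X_aX_bX_c$), picks $f'\in I$ realising that leading monomial, and subtracts a suitable monomial multiple of $f'$. When the monomial multiplier involves some $X_c$ with $c\ge1$ the degree-one part is unchanged (the correction lies in $(X_1,\dots,X_n)^2$), and when the multiplier is a power of $X_0$ one first replaces $f'$ by $f'-\sum\lambda_j f_j$ to push $\gamma_{f'}$ into $V_N$ without raising $\init_<(f')$; in each case one either lowers $\init_<(f)$ while keeping the degree-one part nonzero and inside $V_N$ (contradicting minimality) or produces a smaller-leading-term element of the same forbidden type.

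The steps through the reductions are routine; the main obstacle is closing this descent. The real difficulty is that for a general monomial order the leading monomial of an element of $I$ need not be the term $X_0^{d-1}X_{v^*}$ coming from its degree-one part but may sit inside the higher-order tail $g$, so the contradiction is not automatic; the induction on $\init_<(f)$ must be organised so that the degree-one part stays supported on $N(0)$ at every step, and there is a residual configuration — roughly, when the reduced Gröbner basis of $I$ itself contains an element whose leading monomial is an ``interior'' non-face and whose linear part lies in $V_N$ — that the curve hypothesis alone does not rule out (it only yields $\dim_K(L\cap V_N)\le|N(0)|-1$), so one must feed in extra structure, presumably the squarefreeness of $I_\Delta$ together with connectedness of $X$ (equivalently of $\Delta$). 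For the lexicographic order this residue is empty: there $\init_<(f)=X_0^{d-1}X_{v^*}$ always, forcing $d=2$ and $\{0,v^*\}$ a non-edge outright, so $L=W$ and $L\cap V_N=0$ is immediate.
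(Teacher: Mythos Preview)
Your ``if'' direction is correct and is exactly the paper's argument: the quadrics $f_j\in I$ with $\init_<(f_j)=X_0X_j$ for $j\neq v_0$ give an echelon system whose $n-1$ pivots force the Jacobian at $P_0$ to have rank $n-1$. The tangent-space bookkeeping you set up (identifying $L$ with the span of the $\gamma_q$) is also fine, and your observation that the lex case of the converse is immediate is precisely the content of the paper's Remark following the theorem.

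The ``only if'' direction, however, is not proved. You set up a descent on $\init_<(f)$ over homogeneous $f\in I$ with $\gamma_f\in V_N\setminus\{0\}$, but---as you yourself say---the descent does not close for a general monomial order. The concrete failure is in the step where the multiplier $m$ is a power of $X_0$ and you propose replacing $f'$ by $f'-\sum_j\lambda_j X_0^{|\sigma|-2}f_j$ to force $\gamma_{f'}\in V_N$: the correction has leading monomial $X_0^{|\sigma|-1}X_j$, which (for instance already in lex, whenever $0\notin\sigma$) is strictly larger than $X_\sigma=\init_<(f')=\init_<(f)$. Subtracting this from $f$ therefore \emph{raises} $\init_<(f)$ rather than lowering it, and minimality gives no contradiction. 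Your text acknowledges this residual case and defers it to ``extra structure'', so what you have is a proof of the ``if'' half and of the lex special case of ``only if'', with the general ``only if'' left open. The paper does not attempt a self-contained proof of that implication either: it quotes \cite[Theorem~4.9]{CDV}, and the subsequent Remark explicitly notes that the argument for general monomial orders requires a more sophisticated treatment, given there.
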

\begin{proof}
The``only if'' statement comes from \cite[Theorem 4.9]{CDV}. The ``if'' part is easier: let $a\in[n]$ such that $\{0,a\}$ is an edge of $\Delta$, and $g_{0j}\in I$ polynomials with $\init_<(g_{0j})=X_0X_j$ for any $j\in [n]\setminus \{a\}$. Complete the $g_{0j}$'s to a system of generators of $I$, consider the corresponding Jacobian matrix and its $(n-2)$-submatrix corresponding to the rows indexed by $g_{0j}$ and columns by $X_j$ with $j\in [n]\setminus \{a\}$. When evaluating it at $P_0$, such a submatrix can be arranged to an upper triangular matrix with $1$'s on the diagonal, so the Jacobian matrix evaluated at $P_0$ has rank $n-1$; hence $P_0$ is a nonsingular point of $X$ by the Jacobian criterion.
\end{proof}

\begin{remark}
The ``only if" part  of the above result is easy when $<$ is the lexicographic order: In this case, a monomial divisible by $X_0$ is in the support of a polynomial $f\in S$ if and only if $\init_<(f)$ is a multiple of $X_0$, and no monomial divisible by $X_0^2$ can be in the support of a polynomial $g\in I$ of the reduced Gr\"obner basis of $I$. Therefore, the Jacobian matrix, computed with respect to the reduced Gr\"obner basis of $I$, evaluated at $P_0$, has exactly $n-\delta$ nonzero rows, where $\delta$ is the number of neighbors of $0$. If $<$ is not LEX, the latter fact is not true, so one needs a more sophisticated argument in general, which is given in \cite{CDV}.
\end{remark}

As mentioned, we are going to use Theorem \ref{thm0} as the starting point for our proof of Conjecture \ref{conj:main} in dimension $1$. To settle an inductive procedure, we need to introduce the following recursive definition:

\begin{enumerate}
\item Start with $\ell:=0$ and $A:=\emptyset$.
\item Let $W=[n]_0\setminus A$, and consider $\Delta_ W$, the induced graph on $W$.
\item If there exists a free vertex $v$ of $\Delta_W$, let $\ell:=\ell+1$, $A:=A\cup\{v\}$ and go to (2). \\
Otherwise, return $\ell(\Delta):=\ell$, $A(\Delta):=A$ and $W(\Delta):=W$ as outputs.
\end{enumerate}

It is easy to show that $\ell(\Delta)$ is an invariant of the graph, namely that it does not depend on the choice of the free vertices $v$ in the recursion. The number $\ell(\Delta)$ is indeed equal to the maximum number of collapses that it is possible perform, starting from $\Delta$. As it turns out, $\ell(\Delta)\leq n$, and equality holds if and only if $\Delta$ is a tree. If $\Delta$ is connected and it is not a tree, $A(\Delta)$ and $W(\Delta)$ are invariants of $\Delta$ as well.

Theorem \ref{thm0} implies that $X$ is singular when $\Delta$ is a graph with $\ell(\Delta)=0$.  We will carry out an inductive argument on $\ell(\Delta)$ to prove that $X$ is singular whenever $\ell(\Delta)<n$; this will prove Conjecture \ref{conj:main} for curves.

\begin{remark}
    Given a graph $\Delta$, using the ``VersalDeformations'' package for Macaulay2 \cite{GS}, we are able to write down all possible Gr\"{o}bner deformations for $\Delta$. In the case that $\Delta$ is not Gr\"{o}bner smoothable, by analyzing the singularities of a generic Gr\"{o}bner deformation, we are able to pin down a special point in $X$ that is always singular for all Gr\"{o}bner deformations. In the following section, we will show that if $\Delta$ is not a tree ($\ell(\Delta) < n$), then up to a change of variable preserving the initial ideal, we can always show that $P_a$ is a singular point of X, where $a$ is the largest variable in $W(\Delta)$.  
\end{remark}

\section{Proof of the main result}

We denote by $\mnf(\Delta)$ the set of minimal non-faces of $\Delta$, namely,
\[\mnf(\Delta)=\{\sigma\notin\Delta:\tau\in\Delta \ \forall \ \tau\subsetneq\sigma\}.\]

Note that, since $\{v\}\in \Delta$ for all $v\in[n]_0$ and $\Delta$ is a graph, if $\sigma\in\mnf(\Delta)$, then $|\sigma|\in\{2,3\}$.
For all $\sigma\subseteq [n]_0$, let $X_{\sigma}:=\prod_{i\in\sigma}X_i$. We have that $I_{\Delta}=(X_{\sigma}:\sigma\in\mnf(\Delta))$. For each minimal non-face $\sigma$ of $\Delta$, let $g_{\sigma}\in I$ be a homogeneous (quadratic or cubic, depending on $|\sigma|$) polynomial such that $\init_<(g_{\sigma})=X_{\sigma}$. The set
\[G=\{g_{\sigma}\in S:\sigma\in\mnf(\Delta)\}\]
is a Gr\"obner basis of $I$. Sometimes, if $\sigma=\{i,j\}$, we will write $g_{ij}$ for $g_{\sigma}$. We will not assume that $G$ is reduced, since for some arguments, we will perform some change of variables after which the property of being reduced may be lost. However, we will need the following weaker, but easier to control, notion:

\begin{definition}
We say that a Gr\"obner basis $f_1,\ldots ,f_m\in S$ is {\it $0$-reduced} if $\init_<(f_i)\notin \supp(f_j)$ for all $i\neq j$ such that $X_0|\init_<(f_i)$.
\end{definition}

\begin{lemma}\label{l1}
Assume that $0$ is a free vertex of $\Delta$ and that $G$ is $0$-reduced. Then, for all $g\in G$, $\init_<(g)\in K[X_1,\ldots ,X_n]\implies g\in K[X_1,\ldots ,X_n]$. Hence the set $G'=\{g_{\sigma}\in S:\sigma\in\mnf(\Delta),0\notin \sigma\}$ is Gr\"obner basis of the ideal $I'=I\cap K[X_1,\ldots ,X_n]$.
\end{lemma}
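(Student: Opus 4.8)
The plan is to separate the statement into its two assertions. The ``hence'' is the easy half: once we know that $g_\sigma\in K[X_1,\ldots,X_n]$ whenever $0\notin\sigma$, we get $G'\subseteq I'$, and for any nonzero $f\in I'\subseteq I$ the leading term $\init_<(f)$ is a multiple of some $X_\rho=\init_<(g_\rho)$ with $g_\rho\in G$; since $\init_<(f)\in K[X_1,\ldots,X_n]$ this forces $0\notin\rho$, hence $g_\rho\in G'$, so $\init_<(G')$ generates $\init_<(I')$ and $G'$ is a Gr\"obner basis of $I'$. So the work is to prove, for $\sigma$ with $0\notin\sigma$, that $g_\sigma$ has no monomial divisible by $X_0$; the two ingredients will be the $0$-reducedness hypothesis and the smoothness of $X$ at $P_0$ coming from Theorem~\ref{thm0}.

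Let $a$ be the unique neighbour of the free vertex $0$. I would first record the combinatorial fact that the minimal non-faces of $\Delta$ through $0$ are exactly the pairs $\{0,j\}$, $j\in[n]\setminus\{a\}$ (a $3$-element minimal non-face through $0$ would require $\{0,j\},\{0,k\}$ to both be edges, impossible); consequently a monomial $X_0^e m$ with $e\ge1$ and $m\in K[X_1,\ldots,X_n]$ fails to lie in $\init_<(I)=I_\Delta$ exactly when $m$ is a power of $X_a$. Now suppose $g_\sigma$ with $0\notin\sigma$ has a monomial divisible by $X_0$, and let $\mu$ be the $<$-largest one; then $\mu<\init_<(g_\sigma)=X_\sigma$. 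If $\deg X_\sigma=2$, then $0$-reducedness already forces $\mu\in\{X_0^2,X_0X_a\}$ (each $X_0X_j$ with $j\ne a$ is $\init_<(g_{0j})$ and hence cannot occur in $\supp(g_\sigma)$), and $X_0^2>X_\sigma$ is excluded, so $\mu=X_0X_a$. If $\deg X_\sigma=3$, the monomial $\mu$ could be non-standard; but then it is divisible by some $X_\rho=\init_<(g_\rho)$ with $\rho\ne\sigma$, and replacing $g_\sigma$ by $g_\sigma$ minus the appropriate monomial multiple of $g_\rho$ keeps it in $I$, keeps its leading term $X_\sigma$, keeps $G$ a $0$-reduced Gr\"obner basis (that condition only constrains the quadratic elements), and strictly decreases $\mu$, every new monomial being $(\mu/X_\rho)$ times a monomial of $g_\rho$ below $X_\rho$. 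Iterating this on the successive largest non-standard $X_0$-divisible monomials, we may assume that every $X_0$-divisible monomial of $g_\sigma$ is standard, hence of the form $X_0^eX_a^{\,d-e}$ with $1\le e$ and $d=\deg X_\sigma$; since $X_0^d>X_\sigma$ this leaves only exponents $e$ with $d-e\le d-1$.

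The geometric step concludes. By the ``if'' direction of Theorem~\ref{thm0} (valid since $0$ is free), $P_0$ is a nonsingular point of the curve $X$, and the proof given there exhibits an invertible $(n-1)\times(n-1)$ Jacobian submatrix at $P_0$ built from the $g_{0j}$, $j\ne a$; using $0$-reducedness of $G$ on these elements, the linear parts at $P_0$ of their dehomogenizations $g_{0j}^{\mathrm{deh}}$ (in the chart $X_0\ne0$, coordinates $x_i=X_i/X_0$) are $x_j+\lambda_j x_a$ for suitable $\lambda_j\in K$. Hence these $n-1$ elements have independent differentials at $P_0$ and, $X$ being a smooth curve there, cut $X$ out formally near $P_0$; the tangent line is $\{x_j=-\lambda_j x_a:j\ne a\}$, which is not contained in $\{x_a=0\}$, so $x_a$ is a uniformizer of $\O_{X,P_0}$ and $X$ admits a formal parametrization $x_j=\psi_j(t)$ $(j\ne a)$, $x_a=t$, with each $\psi_j\in tK[[t]]$. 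Writing $g_\sigma^{\mathrm{deh}}(x)=X_\sigma(x)+p(x)+\sum_{e\ge1}c_e\,x_a^{\,d-e}$, where $p$ is homogeneous of degree $d$ in the $X_0$-free variables, and substituting the parametrization (using $g_\sigma^{\mathrm{deh}}\in I^{\mathrm{deh}}$) gives
\[0=X_\sigma(\psi)+p(\psi)+\sum_{e\ge1}c_e\,t^{\,d-e}\in K[[t]],\]
in which the first two summands have $t$-order $\ge d$ while $\sum_{e\ge1}c_e t^{\,d-e}$ is a polynomial of degree $\le d-1$; hence all $c_e=0$, i.e.\ $g_\sigma\in K[X_1,\ldots,X_n]$, contradicting the choice of $\mu$.

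The step I expect to be the main obstacle is the geometric one: one must argue carefully that the $g_{0j}$, $j\ne a$, cut out $X$ itself — not merely some ambient curve containing it — in the formal neighbourhood of $P_0$ (this is where smoothness of $P_0$ and $\dim X=1$ are both needed), and that $x_a$ is genuinely a uniformizer, which rests on the precise identification of the tangent line from the Jacobian computation underlying Theorem~\ref{thm0}. The cubic reduction step is only routine bookkeeping, but it must be stated precisely, since it is what allows the clean ``all $X_0$-divisible monomials are standard'' normalization before the order-of-vanishing argument is applied.
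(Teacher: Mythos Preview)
Your overall strategy is genuinely different from the paper's. The paper argues purely algebraically: it fixes the given $0$-reduced basis $G$ and, via carefully chosen $S$-polynomial reductions and an induction on the order $\prec$, rules out one by one the monomials $X_0X_b$, $X_0^2X_b$, $X_0X_b^2$ from $\supp(g_\sigma)$ when $0\notin\sigma$. You instead invoke the smoothness of $X$ at $P_0$ (via the ``if'' direction of Theorem~\ref{thm0}), produce a formal parametrization with uniformizer $x_a$, and compare $t$-orders. For $|\sigma|=2$ this works cleanly: $0$-reducedness leaves $X_0X_a$ as the only possible $X_0$-divisible monomial, and your order count kills it.

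The degree-$3$ case, however, has a genuine gap. You begin a proof by contradiction (``suppose $g_\sigma$ has a monomial divisible by $X_0$''), but then you \emph{modify} $g_\sigma$ by subtracting monomial multiples of other $g_\rho$ until all remaining $X_0$-divisible monomials are standard. Your order argument then shows that the \emph{modified} $g_\sigma$ lies in $K[X_1,\dots,X_n]$. That is not a contradiction: the reduction process may simply have eliminated every $X_0$-divisible monomial along the way, and nothing prevents this. Concretely, if $\mu=X_0X_jX_k$ with $j\neq a$, reducing by $g_{0j}$ replaces $\mu$ by $X_k\cdot\nu$ with $\nu\in\supp(g_{0j})\setminus\{X_0X_j\}$; when $X_0X_a\notin\supp(g_{0j})$ these new monomials are all $X_0$-free, so the ``bad'' term vanishes with no contradiction produced. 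What you have actually proved is the weaker statement that \emph{some} $0$-reduced Gr\"obner basis (obtained by further reducing the cubic generators) satisfies the conclusion, not that the \emph{given} $0$-reduced $G$ does, which is what the lemma asserts.

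Note also that $0$-reducedness, as defined, only forbids the \emph{exact} monomials $X_0X_j$ ($j\neq a$) from appearing in other generators; it says nothing about degree-$3$ multiples like $X_0^2X_j$ or $X_0X_jX_k$. So for cubic $g_\sigma$ you cannot appeal to $0$-reducedness at all to restrict the $X_0$-divisible support, and your parametrization argument applied directly to the unmodified $g_\sigma$ yields only a single linear relation on the coefficients of the $X_0^2$-part rather than their vanishing. If you want to salvage the geometric approach for degree $3$, you would need an argument that does not pass through an ad hoc modification of $g_\sigma$ --- for instance, by first establishing the quadratic case and then using it to control what the cubic $S$-polynomial reductions can look like, much closer in spirit to what the paper does.
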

\begin{proof}
Let $b\in [n]$ be the only vertex such that $\{0,b\}\in\Delta$. Since $G$ is $0$-reduced, it is enough to show that 
\[P(\sigma): \ \ \ X_0^2,X_0X_b,X_0^3,X_0^2X_b,X_0X_b^2\notin\supp(g_{\sigma})\] 
when $\sigma\subseteq [n]$ is a minimal non-face of $\Delta$. Note that the fact that $X_0^2,X_0^3\notin\supp(g_{\sigma})$ for each $\sigma\in\mnf(\Delta)$ is trivial. We introduce, for $\sigma\in\mnf(\Delta)$, the following property:
\[P'(\sigma): \ \ \ X_0X_b,X_0^2X_b\notin\supp(g_{\sigma}).\]
If $\alpha$ and $\beta$ are minimal non-faces of $\Delta$, we write $\alpha\prec\beta$ if:
\[|\alpha|=|\beta| \ \mbox{ and } \ X_{\alpha}<X_{\beta} \ \ \ \mbox{ or } \ \ \ |\alpha|<|\beta| \ \mbox{ and } \ X_0X_{\alpha}<X_{\beta} \ \ \ \mbox{ or } \ \ \ |\alpha|>|\beta| \ \mbox{ and } \ X_{\alpha}<X_0X_{\beta}.\]

\medskip

Let $\tau \in \mnf(\Delta)$ such that $X_0^2 X_b \in \supp(g_{\tau})$, but $P'(\sigma)$ holds whenever when $\sigma\subseteq [n]$ is a minimal non-face of $\Delta$ such that $\sigma\prec\tau$. Then $\tau=\{i,j,k\}$ with $0<i<j<k<b$. In particular $\{0,k\}\in \mnf(\Delta)$, so we can consider the $S$-polynomial between $g_{0k}$ and $g_{\tau}$:
\[f=X_iX_jg_{0k}-X_0g_{\tau}\in I.\]
Note that $X_0^3X_b\in\supp(f)$. On the other hand, one should be able to reduce $f$ modulo $G$:
\[f=\sum_{\sigma\in \mnf(\Delta)}q_{\sigma}g_{\sigma} \ \ \ \mbox{ with }\init_<(q_{\sigma}g_{\sigma})< X_0X_iX_jX_k,\]
where $q_{\sigma}$ are quadrics if $|\sigma|=2$ and linear forms if $|\sigma|=3$. In particular, there should exist $\sigma\in \mnf(\Delta)$ such that $X_0^3X_b$ is in the support of $q_{\sigma}g_{\sigma}$ and  $\init_<(q_{\sigma}g_{\sigma})< X_0X_iX_jX_k$. 
\begin{enumerate}
\item If $|\sigma|=3$, since $X_0^3 \notin \supp(g_{\sigma})$, the only possibility is that $\init_<(q_\sigma)=X_0$ and $X_0^2 X_b \in \supp(g_{\sigma})$. But the latter condition implies that $\sigma\succeq \tau$, and so 
\[\init_<(q_{\sigma}g_{\sigma})=X_0\init_<(g_\sigma)\geq X_0\init_<(g_\tau)=X_0X_iX_jX_k,\]
a contradiction
\item If $|\sigma|=2$, since $X_0^2 \notin \supp(g_{\sigma})$, $\init_<(q_\sigma)=X_0^2$ and $X_0 X_b \in \supp(g_{\sigma})$. 
\begin{enumerate} 
\item If $0\notin \sigma$, $X_0 X_b \in \supp(g_{\sigma})\implies \sigma\succeq \tau$, and so 
\[\init_<(q_{\sigma}g_{\sigma})=X_0^2\init_<(g_\sigma)\geq X_0\init_<(g_\tau)=X_0X_iX_jX_k,\]
a contradiction 
\item If $0\in\sigma$, say $\sigma=\{0,q\}$, then reducing $f$ at a certain point, a term of the type $X_0^3X_q$, with $q\neq b$, will occur. But this is impossible, because 
\[f=\lambda X_0^3X_b+X_0^2g+X_0h+r\] 
where $\lambda\in K$ and $ g,h,r\in K[X_1,\ldots ,X_n]$, and reducing it, the only possibly new occurring term divisible by $X_0^3$ is $X_0^3X_b$ since $G$ is $0$-reduced. So at each step the reduction of $f$ will have the form $f'=\lambda' X_0^3X_b+X_0^2g'+X_0h'+r'$ where $\lambda'\in K$ and $g',h',r'\in K[X_1,\ldots ,X_n]$.
\end{enumerate}
\end{enumerate}

\medskip

Next, let $\tau \in \mnf(\Delta)$ such that $X_0 X_b \in \supp(g_{\tau})$, but $P'(\sigma)$ holds whenever $\sigma\subseteq [n]$ is a minimal non-face of $\Delta$ such that $\sigma\prec \tau$. Then $\tau=\{i,j\}$ with $0<i<j<b$. In particular, $\{0,j\}\in\mnf(\Delta)$, so we can consider the $S$-polynomial $f$ between $g_{0j}$ and $g_{\tau}$, namely
\[f=X_ig_{0j}-X_0g_{\tau}\in I.\]
Note that $X_0^2X_b\in\supp(f)$. On the other hand, one should be able to reduce $f$ modulo $G$:
\[f=\sum_{\sigma\in \mnf(\Delta)}\ell_{\sigma}g_{\sigma} \ \ \ \mbox{ with }\init_<(\ell_{\sigma}g_{\sigma})< X_0X_iX_j,\]
where $\ell_{\sigma}$ are linear forms if $|\sigma|=2$ and scalars if $|\sigma|=3$. In particular, there should exist a minimal non-face $\sigma$ of $\Delta$ such that $X_0^2X_b$ is in the support of $\ell_{\sigma}g_{\sigma}$ and   $\init_<(\ell_{\sigma}g_{\sigma})< X_0X_iX_j$. 
\begin{enumerate}
\item If $|\sigma|=3$, then $X_0^2 X_b \in \supp(\ell_{\sigma}g_{\sigma})=\supp(g_{\sigma})$. Then $\sigma\succeq \tau$, and so 
\[\init_<(\ell_{\sigma}g_{\sigma})=\init_<(g_\sigma)\geq X_0\init_<(g_{\tau})=X_0X_iX_j,\]
a contradiction
\item If $|\sigma|=2$, since $X_0^2 \notin \supp(g_{\sigma})$, $\init_<(\ell_\sigma)=X_0$ and $X_0 X_b \in \supp(g_{\sigma})$. 
\begin{enumerate} 
\item If $0\notin \sigma$, $X_0 X_b \in \supp(g_{\sigma})\implies \sigma\succeq \tau$, and so 
\[\init_<(\ell_{\sigma}g_{\sigma})=X_0\init_<(g_\sigma)\geq X_0\init_<(g_\tau)=X_0X_iX_j,\]
a contradiction 
\item If $0\in\sigma$, say $\sigma=\{0,q\}$, then reducing $f$ at a certain point a term of the type $X_0^2X_q$, with $q\neq b$, will occur. But this is impossible, because 
\[f=\lambda X_0^2X_b+X_0g+h\] 
where $\lambda\in K$ and $ g,h\in K[X_1,\ldots ,X_n]$, and reducing it, the only possibly new occurring term divisible by $X_0^2$ is $X_0^2X_b$ since $G$ is $0$-reduced. So at each step, the reduction of $f$ will be  of the form $f'=\lambda' X_0^2X_b+X_0g'+h'$, where $\lambda'\in K$ and $g',h'\in K[X_1,\ldots ,X_n]$.
\end{enumerate}
\end{enumerate}

\medskip

Hence $P'(\sigma)$ holds for any minimal non-face $\sigma\subseteq [n]$ of $\Delta$.

\medskip

Finally, let $\tau\in \mnf(\Delta)$ such that $X_0 X_b^2 \in \supp(g_{\tau})$, but $X_0 X_b^2 \notin \supp(g_{\sigma})$ whenever $\sigma\subseteq [n]$ is in $\mnf(\Delta)$ and $X_\sigma <X_\tau$. Then $\tau=\{i,j,k\}$ with $0<i<j<b,k$. In particular, $\{0,j\}\in\mnf(\Delta)$, so we can consider the $S$-polynomial between $g_{0j}$ and $g_{\tau}$:
\[f=X_iX_kg_{0j}-X_0g_{\tau}\in I.\]
Note that $X_0^2X_b^2\in\supp(f)$. On the other hand, one should be able to reduce $f$ modulo $G$:
\[f=\sum_{\sigma\in\mnf(\Delta)}q_{\sigma}g_{\sigma} \ \ \ \mbox{ with }\init_<(q_{\sigma}g_{\sigma})< X_0X_iX_jX_k.\]
where $q_{\sigma}$ are quadrics if $|\sigma|=2$ and linear forms if $|\sigma|=3$. In particular there should exist $\sigma\in \mnf(\Delta)$ such that $X_0^2X_b^2$ is in the support of $q_{\sigma}g_{\sigma}$ and   $\init_<(q_{\sigma}g_{\sigma})< X_0X_iX_jX_k$. 
\begin{enumerate}
\item If $|\sigma|=3$, since $X_0^3,X_0^2X_b \notin \supp(g_{\sigma})$, the only possibility is that $\init_<(q_\sigma)=X_0$ and $X_0 X_b^2 \in \supp(g_{\sigma})$. But the latter condition implies that $X_\sigma\geq X_ \tau$, and so 
\[\init_<(q_{\sigma}g_{\sigma})=X_0\init_<(g_\sigma)\geq X_0\init_<(g_\tau)=X_0X_iX_jX_k,\]
a contradiction.
\item If $|\sigma|=2$ and $0\notin \sigma$, since $X_0^2,X_0X_b \notin \supp(g_{\sigma})$, $\init_<(q_\sigma)=X_0^2$ and $X_b^2 \in \supp(g_{\sigma})$. Then reducing $f$ at a certain point, a term of the type $X_0^2X_\sigma$ will occur. But this is impossible, because 
\[f=\lambda X_0^2X_b^2+X_0g+h,\] 
where $\lambda\in K$ and $ g,h\in K[X_1,\ldots ,X_n]$, and reducing it, the only possibly new occurring term divisible by $X_0^2$ is $X_0^2X_b^2$ since $G$ is $0$-reduced and $P'(\sigma)$ holds for any minimal non-face $\sigma\subseteq [n]$ of $\Delta$. So at each step, the reduction of $f$ will be  of the form $f'=\lambda' X_0^2X_b^2+X_0g'+h'$, where $\lambda'\in K$ and $g',h'\in K[X_1,\ldots ,X_n]$.
\item If $|\sigma|=2$ and $0\in\sigma$, say $\sigma=\{0,q\}$; since $X_0^2\notin \supp(g_{\sigma})$ there are two possibilities: $\init_<(q_\sigma)=X_0^2$ and $X_b^2 \in \supp(g_{\sigma})$ or $\init_<(q_\sigma)=X_0X_b$ and $X_0X_b \in \supp(g_{\sigma})$. Then reducing $f$ at a certain point, a term of the type $X_0^3X_q$ (in the first case) or $X_0^2X_bX_q$ (in the second case) will occur, and both situations are impossible, arguing as in the previous point.
\end{enumerate}

\end{proof}
Assume that $0$ is a free vertex of $\Delta$ and that $G$ is $0$-reduced. The above lemma implies that if $S'=K[X_1,\ldots ,X_n]$ and $I'=I\cap S'\subseteq S'$, then 
\[G'=\{g_{\sigma}\in S:\sigma\in\mnf(\Delta) , \ \sigma\subseteq [n]\}\]
is a Gr\"obner basis of $I'$. Geometrically, $I'$ defines the closure $X'$ of the projection of $X$ from the point $P_0=[1:0:\ldots :0]\in\PP^n$ to the hyperplane $H_0=\{X_0=0\}\cong \PP^{n-1}$. It is necessary to take the closure because $P_0$ is (clearly) a point of $X$ and the projection $\pi:\PP^n\setminus \{P_0\}\to H_0$ is not defined at $P_0$.

For every integer $0\leq a \leq n$, let $P_a$ denote the point of $\PP^n=\Proj S$ with the only nonzero entry in the $a$-th position; similarly, for all $1\leq b \leq n$, let  $P'_b$ denote the point of $\PP^{n-1}=\Proj S'$ with the only nonzero entry in the $b$-th position.

\begin{lemma}\label{l2}
Assume that $0$ is a free vertex of $\Delta$. If, for some $a\in [n]$ such that $P_a'\in X'$ there exists $j\in [n]$ with $X_0 X_a \in \supp(g_{0j})$ (if $\{0,a\}\notin\Delta$ the latter condition is automatically satisfied by taking $j=a$), then there exists a unique $Q\in X$ such that $\pi(Q)=P_a'$. Furthermore, $Q$ is singular if and only if $P_a'$ is singular.
\end{lemma}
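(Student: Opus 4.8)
The plan is to analyze the fiber $\pi^{-1}(P_a')\cap X$ explicitly and then compare the local ring at $Q$ with that at $P_a'$ via the projection morphism. First I would set up coordinates: a point of $\PP^n$ mapping to $P_a'=[0:\cdots:0:1:0:\cdots:0]$ (with the $1$ in position $a$) under projection from $P_0$ must have the form $Q_t=[t:0:\cdots:0:1:0:\cdots:0]$ for some $t\in K$. So the fiber is parametrized by the single parameter $t$, and the task is to show exactly one such $t$ gives a point of $X$. To do this I would evaluate the Gröbner basis $G$ at $Q_t$. Any $g_\sigma$ with $0\notin\sigma$ and $\sigma\neq\{a\}$-related lies in $I'$ by Lemma \ref{l1}, hence vanishes at $P_a'$, hence at $Q_t$ for every $t$ (those polynomials don't involve $X_0$). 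The constraints therefore come only from the $g_{0j}$. The key computation is that $g_{0j}(Q_t)$ is, because $\init_<(g_{0j})=X_0X_j$ and $0$ is a free vertex (so $j\ne b$ forces... ), a polynomial in $t$ whose behavior is controlled by the hypothesis $X_0X_a\in\supp(g_{0j})$: this term contributes a nonzero linear-in-$t$ coefficient when evaluated at $Q_t$ (all other monomials in $g_{0j}$ either vanish at $Q_t$ or contribute constants independent of $t$), so at least one of the equations $g_{0j}(Q_t)=0$ is a genuinely degree-$1$ (or at worst low-degree with a forced structure) equation in $t$, pinning $t$ down — I need to argue it has a unique solution, which should follow from the triangular/initial-term structure rather than from counting roots. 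Establishing that this really forces a \emph{unique} $t$, rather than merely finitely many, is the first place care is needed.

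Next, granting uniqueness, write $Q=Q_{t_0}$ for this distinguished parameter. For the singularity comparison, I would use that projection from a point of a variety, restricted to a fiber that is a single reduced point, is an isomorphism of local rings near that point. Concretely, $\pi$ induces $\O_{X',P_a'}\to\O_{X,Q}$; since $\pi$ is a finite morphism onto its image away from issues at $P_0$ (and here $Q\ne P_0$ because the fiber over $P_a'$ does not contain $P_0$ — $P_0$ maps nowhere, and more to the point $P_a'\ne$ image of any neighborhood of $P_0$), and since the scheme-theoretic fiber over $P_a'$ is $\Spec K$ (length one, by the uniqueness argument upgraded to a statement about the ideal), the map on complete local rings is an isomorphism by the structure of finite morphisms with reduced length-one fiber. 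Therefore $\O_{X,Q}$ is regular iff $\O_{X',P_a'}$ is regular, i.e. $Q$ is singular iff $P_a'$ is singular.

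The one subtlety I want to flag is that the ``length-one fiber'' claim must be scheme-theoretic, not just set-theoretic: I need $I$ localized appropriately, together with the single equation in $t$ extracted from some $g_{0j}$, to generate the maximal ideal of the fiber — equivalently, that the Jacobian of that $t$-equation in $t$ is a unit at $t_0$. This is exactly where the hypothesis ``$X_0X_a\in\supp(g_{0j})$'' does its work: it guarantees the coefficient of $t$ is nonzero, so the fiber equation is unramified at $t_0$, giving both uniqueness and the flatness/isomorphism needed for the local-ring comparison. The remaining steps — that $G'$ being a Gröbner basis lets us read off $\O_{X',P_a'}$ from the $g_\sigma$ with $0\notin\sigma$, and that $Q\neq P_0$ — are routine given Lemma \ref{l1} and the set-up, so the heart of the proof is the local analysis of the single variable $t$ along the fiber.
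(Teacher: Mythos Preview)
Your approach is close to the paper's and largely correct, but there is a genuine gap: you establish that the fiber $\pi^{-1}(P_a')$ contains \emph{at most} one point, but never that it is \emph{nonempty}. The hypothesis is only $P_a'\in X'=\overline{\pi(X\setminus\{P_0\})}$, and a priori $P_a'$ could lie in $X'\setminus\pi(X)$. Concretely, your equation $g_{0j}(Q_t)=\lambda t+\alpha=0$ pins down $t_0=-\alpha/\lambda$, but you never check that the remaining equations $g_{0k}(Q_{t_0})=0$ hold for $k\neq j$; if they do not, the fiber is empty. The paper handles existence by a tangent-space argument at $P_0$: the only way $P_a'\in X'\setminus\pi(X)$ is if $P_a$ lies on the tangent line to $X$ at $P_0$, and the linear form $\ell_{0j}=\sum_k\frac{\partial g_{0j}}{\partial X_k}(P_0)X_k$ cutting that tangent space satisfies $\ell_{0j}(P_a)=\lambda\neq 0$ precisely because $X_0X_a\in\supp(g_{0j})$. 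So the same hypothesis you use for uniqueness is what the paper uses for existence, but via a different mechanism that you omit.

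For the singularity comparison, your local-ring/unramifiedness argument and the paper's tangent-space argument are two packagings of the same computation. The paper shows $P_0\notin T_QX$ directly by computing $\frac{\partial g_{0j}}{\partial X_0}(Q)=\lambda\neq 0$; this is exactly your ``Jacobian of the $t$-equation is a unit at $t_0$''. Either route works. The paper's is more elementary (no structure theory of finite morphisms needed), while yours is more conceptual once properly set up. Note, though, that to invoke finiteness of $\pi$ over a neighborhood of $P_a'$ you implicitly need $P_0$ not to lie in the closure of $\pi^{-1}(P_a')$ under the extended map---i.e., that $\bar\pi(P_0)\neq P_a'$---which is again the tangent-space fact above. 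So in your framework too, the existence/tangent-space step is not optional; it is the missing ingredient.
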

\begin{proof}
First of all, we need to show that $P_a'\in\pi(X)\subseteq X'$. The only way in which $P_a'\in X'\setminus \pi(X)$ can happen is if $P_a$ belongs to the tangent space of $X$ at $P_0$. Such a tangent space is defined by the linear forms:
\[\ell_{\sigma}=\sum_{k=1}^n\frac{\partial g_{\sigma}}{\partial X_k}(P_0)X_k, \ \ \ \sigma\in \mnf(\Delta).\]
If $j\in [n]$ is such that $X_0X_a$ occurs with nonzero coefficient $\lambda\in K$ in $g_{0j}$, then
\[\ell_{0j}(P_a)=\lambda\neq 0.\]
Hence $P_a'$ is actually in $\pi(X)$. Moreover, if $P,Q\in X$ are such that $\pi(P)=\pi(Q)=P'_a$, from $g_{0j}(P)=g_{0j}(Q)=0$ we deduce that  $P=Q=[\mu:0:\ldots :0:1:0:\ldots:0]\in\PP^n$ with $1$ at the $a$-th entry and $\mu=-\alpha/\lambda$, where $\alpha$ is the coefficient of $X_a^2$ in $g_{0j}$. Hence there exists a unique $Q\in X$ such that $\pi(Q)=P'_a$. 

Of course, if $Q$ is singular, then $\pi(Q)=P_a'$ is also singular. On the other hand, suppose that $Q$ is nonsingular. To show that $\pi(Q)=P_a'$ is nonsingular, it is enough to check that $P_0$ is not in the tangent space of $X$ at $Q$. But one of the equations cutting out this tangent space is 
\[q_{0j}=\frac{\partial g_{0j}}{\partial X_0}(Q)(X_0-\mu X_a)+\sum_{\substack{k=1 \\ q\neq a}}^n\frac{\partial g_{0j}}{\partial X_k}(Q)X_k,\]
Since $q_{0j}(P_0)=\displaystyle \frac{\partial g_{0j}}{\partial X_0}(Q)=\lambda \neq 0$, $P_0$ does not belong to the tangent space of $X$ at $Q$.
\end{proof}

\begin{theorem}\label{thm:main}
If $X$ is a nonsingular connected curve, then $\Delta$ is a tree.
\end{theorem}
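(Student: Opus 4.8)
I would prove the contrapositive statement: if $\Delta$ is connected and not a tree (equivalently $\ell(\Delta)<n$), then $X$ is singular. The natural framework, in keeping with the Remark above, is induction on $\ell(\Delta)$; to make the induction survive the projection $\pi$ from $P_0$ one must strengthen the statement so that it pinpoints the singular point. The strengthened claim I would carry through the induction is: \emph{after possibly replacing $I$ by the image $\varphi(I)$ under a change of coordinates of ``lower unitriangular'' shape $X_i\mapsto X_i+(\text{form in }X_{i+1},\dots,X_n)$} --- such a $\varphi$ fixes every initial term, hence fixes $\init_<(I)=I_\Delta$, because $X_0>\cdots>X_n$ --- \emph{the coordinate point $P_a$ is a singular point of $X$, where $a=\min W(\Delta)$ is the largest variable among the vertices of the non-collapsible core $W(\Delta)$.} I would also note at the outset that we may assume $G$ is $0$-reduced, since reducing those $g_\sigma$ whose initial term is divisible by $X_0$ modulo the others yields a $0$-reduced Gr\"obner basis of $I$.

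For the base case $\ell(\Delta)=0$ there are no free vertices, so in particular $0$ is not free, and Theorem~\ref{thm0} gives that $P_0$ is singular; since $W(\Delta)=[n]_0$ here, $P_0=P_a$. More generally, whenever $0$ is not a free vertex of $\Delta$, Theorem~\ref{thm0} already yields that $P_0$ is singular, and if in addition $0\in W(\Delta)$ then $a=0$ and we are done. The residual possibility when $0$ is not free is that $0\notin W(\Delta)$ --- so $\ell(\Delta)\ge1$ but $0$ becomes free only after collapsing some of its neighbours --- and this I flag as a delicate case to be handled separately; see the last paragraph.

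Now suppose $0$ is a free vertex of $\Delta$, with unique neighbour $b$. By Lemma~\ref{l1}, $G'=\{g_\sigma:\sigma\in\mnf(\Delta),\,0\notin\sigma\}$ is a Gr\"obner basis of $I'=I\cap K[X_1,\dots,X_n]$; geometrically $X'=\VV(I')\subseteq\PP^{n-1}$ is the closure of $\pi(X)$ and $\init_{<'}(I')=I_{\Delta'}$ with $\Delta'=\Delta\setminus\{0\}$. Since $0$ is free, $\Delta'$ is connected, is still not a tree (any cycle of $\Delta$ avoids the free vertex $0$), and --- collapsing $0$ first in the recursive definition --- satisfies $\ell(\Delta')=\ell(\Delta)-1$ and $W(\Delta')=W(\Delta)$. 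After shifting indices down by one, the inductive hypothesis applies to $\Delta'$ and produces, after absorbing a unitriangular change of coordinates of $K[X_1,\dots,X_n]$ (extended to $S$ by fixing $X_0$), a singular point $P'_a$ of $X'$ with $a=\min W(\Delta')=\min W(\Delta)\in[n]$. Provided the hypothesis of Lemma~\ref{l2} holds at $a$, i.e.\ some $g_{0j}$ has $X_0X_a$ in its support, that lemma gives a unique $Q\in X$ over $P'_a$, and $Q$ is singular because $P'_a$ is. Writing $Q=[\mu:0:\cdots:0:1:0:\cdots:0]$ with the $1$ in the $a$-th slot, the substitution $X_0\mapsto X_0+\mu X_a$ fixes $\init_<(I)=I_\Delta$ and carries $Q$ to $P_a$; replacing $I$ by the resulting ideal gives the strengthened statement for $\Delta$, closing the induction.

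The main obstacle, I expect, is verifying the hypothesis of Lemma~\ref{l2}, that $X_0X_a$ occurs in the support of some $g_{0j}$. This is automatic when $\{0,a\}\notin\Delta$ (take $j=a$, so $\init_<(g_{0a})=X_0X_a$), so the only problem is $\{0,a\}\in\Delta$, which forces $a=b$. In that case one wants a non-neighbour $j$ of $0$ with $j<b$: then $X_b<X_j$, so $X_j\mapsto X_j+\beta X_b$ fixes every initial term (hence $I_\Delta$) while introducing $\beta X_0X_b$ into $g_{0j}$ --- but such a $j$ exists only if $b\ge2$, so the corner $b=1$ (the unique neighbour of $0$ is the top variable of the core) needs a separate argument, as does the case ``$0$ not free but $0\notin W(\Delta)$'', which cannot be reached by projecting away $X_0$. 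For these I anticipate one must either refine the inductive invariant or analyze the Jacobian of $X$ directly near $P_0$ and $P_1$ in the spirit of Theorem~\ref{thm0}, using the explicit Gr\"obner deformations of $\Delta$ described in the Remark; there is also genuine bookkeeping in tracking how the successive unitriangular changes of coordinates interact with the support condition of Lemma~\ref{l2}. I expect these points to be where the real difficulty lies.
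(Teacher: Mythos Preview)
Your inductive framework and the strengthened claim (after a lower-unitriangular change of coordinates, $P_a$ is singular, $a=\min W(\Delta)$) match the paper's approach, and your treatment of the situation where some $g_{0j}$ contains $X_0X_a$ in its support via Lemma~\ref{l2} is exactly what the paper does in its Case~I.

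The substantial gap is in the complementary case, where $X_0X_a\notin\supp(g_{0j})$ for every $j\in[n]\setminus\{a\}$. Your plan is to force the hypothesis of Lemma~\ref{l2} by a substitution $X_j\mapsto X_j+\beta X_a$ for some $j<a$; you correctly note this fails when $a=b=1$, and even when $a\ge2$ you would still need to verify that the substitution does not destroy the $0$-reducedness of $G$ or the inductive conclusion about $P'_a$. The paper does \emph{not} try to salvage Lemma~\ref{l2} here. Instead it shows that $X$ is \emph{reducible}: since in this case $a$ equals the unique neighbour $b$ of $0$, one aims to prove that no monomial of $K[X_0,X_a]$ occurs in any $g_\sigma$, so that the entire line $\overline{P_0P_a}$ lies in $X$. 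Lemma~\ref{l1} already excludes $X_0^2,X_0X_a,X_0^3,X_0^2X_a,X_0X_a^2$ from every $g_\sigma$ with $\sigma\subseteq[n]$, and the Case~II hypothesis excludes $X_0X_a$ from the $g_{0j}$; the only remaining obstruction is $X_a^2\in\supp(g_{0j})$. Assuming this occurs, the paper performs a carefully designed sequence of unitriangular substitutions (first eliminating $X_aX_{a_r}$ from the $g_{ac}$ for non-neighbours $c$ of $a$, then using the $S$-polynomials $S(g_{0j},g_{0b_r})$ to locate minimal non-faces $\sigma_1,\dots,\sigma_{k-1}\subseteq[n]$ whose $g_{\sigma_r}$ carry $X_aX_{b_r}$ or $X_a^2X_{b_r}$, and adjusting the $X_{b_r}$ accordingly) so that an explicit $(n-2)\times(n-2)$ minor of the Jacobian of $I'$ at $P'_a$ becomes lower triangular with nonzero diagonal. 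That would make $P'_a$ \emph{nonsingular}, contradicting the inductive hypothesis; hence $X_a^2\notin\supp(g_{0j})$ after all, the line $\overline{P_0P_a}$ is contained in $X$, and $X$ is reducible. This reducibility argument is the missing idea in your proposal, and it also explains why the paper's strengthened inductive statement carries the escape clause ``either $X$ is reducible or, after a change of variables, $P_a$ is singular'': that disjunction is what absorbs the corner cases you flag in your last paragraph rather than a direct Jacobian analysis near $P_0$ and~$P_1$.
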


\begin{proof}

The fact that $\Delta$ is a connected graph follows by \cite{KaSt95}. To prove that $\Delta$ is a tree, we will prove that $\ell(\Delta)=n$. Theorem \ref{thm0} implies that $\ell(\Delta)>0$, and we will prove by induction on $\ell(\Delta)$ that $X$ is singular if $\ell(\Delta)<n$.

\medskip

Assume by contradiction that $\Delta$ is not a tree,  and let $a\in [n]$ be the smallest vertex belonging to $W(\Delta)$, so that $X_a>X_i$ whenever $i\neq a$ belongs to $W(\Delta)$. We want to show that either $X$ is reducible or there is a change of variables preserving $\init_<(I)$ so that $P_a$ is a singular point of $X$.

\medskip

By Theorem \ref{thm0}, $0$ belongs to a unique edge of $\Delta$. So, assuming that $G$ is $0$-reduced (if not, $0$-reduce it), we are in the situation of Lemma \ref{l1}. Let $S'=K[X_1,\ldots ,X_n]$, $I'=I\cap S'\subseteq S'$, $G'=\{g_{\sigma}\in S':\sigma\subseteq [n],\sigma\in \mnf(\Delta)\}$ a Gr\"obner basis of $I'$, $X'=\VV(I')\subseteq \PP^{n-1}$ the closure of the projection of $X\setminus \{P_0\}$ from $P_0$, $<'$ the restriction of $<$ to $S'$, and $\Delta'=\Delta_{[n]}$. Note that $\init_{<'}(I')=I_{\Delta'}$, $\ell(\Delta')=\ell(\Delta)-1$ and $W(\Delta')=W(\Delta)$. If $X'$ is reducible, then $X$ must be reducible as well. Hence we can assume that $X'$ is irreducible, so we know by induction that (there exists a change of variables in $S'$ preserving $\init_{<'}(I')$ under which) $P_a'$ is a singular point of $X'$.

\bigskip

{\bf Case I: There exists $j\in [n]$ with $X_0 X_a \in \supp(g_{0j})$}: In this situation, by Lemma \ref{l2}, there exists a point $Q\in X$ of the form $Q=[\mu:0:\ldots :0:1:0:\ldots:0]$ at which $X$ is singular, contradicting the fact that $X$ is smooth. To preserve the inductive hypothesis, we make a change of variables of the form $X_0\mapsto X_0-\mu X_a$ that will preserve $\init_<(I)$ and make $P_a$ a singular point of $X$.

\bigskip

{\bf Case II: $X_0 X_a \notin \supp(g_{0j})$ for all $j\in [n]\setminus \{a\}$}: We want to show in this case that $X$ is reducible, contradicting the fact that it is nonsingular and connected. Toward this goal, we will prove that, up to a change of variables preserving the initial ideal, no monomials of $K[X_0,X_a]$ belong to $\supp(g_{\sigma})$ for any minimal non-face $\sigma$ of $\Delta$. Hence the whole line $\overline{P_0P_a}$ would be contained in the curve $X$, so that it would not be irreducible. It turns out that it is enough to prove that $X_a^2\notin\supp(g_{0j})$ for all $j\in [n]\setminus \{a\}$.

\medskip

Let $0=a_0 < a_1 <a_2 < \ldots < a_k$ be the neighbors of $a$, and note that $k\geq 2$.

\medskip

{\it Step 1}: For any $c\in [n]$ such that $\{a , c\} \notin \Delta$, perform a change of variables of the form 
\[X_c\mapsto X_c - \sum_{r=1}^k \lambda_{cr}X_{a_r},\]
where $X_aX_{a_r}$ occurs in $g_{ac}$ with coefficient $\lambda_{cr}\in K$. Note that if $\lambda_{cr}\neq 0$, we must have $X_c>X_{a_r}$, so this change of variables preserves $\init_<(I)$. 
Let us $0$-reduce the transformed Gr\"obner basis. As one can check, we have:

\begin{enumerate}
\item $X_0 X_a \notin \supp(g_{\sigma})$ for any minimal non-face $\sigma$ of $\Delta$.
\item For any $c\in [n]$ such that $\{a , c\} \notin \Delta$, $X_aX_{a_r}\notin\supp(g_{ac})$ for all $r\in[k]$.
\item $X_a^2\notin \supp(g_{\sigma})$ for any minimal non-face $\sigma\subseteq [n]$ of $\Delta$.
\end{enumerate}

We denote by $A_0$ the set $\{\{a , c\} \notin \Delta:c\in[n]\}$.

\medskip

    {\it Step 2}: If, by contradiction, there exists $j\in [n]$ such that $X_a^2 \in \supp (g_{0j})$, define a sequence $b_1,\ldots, b_{k-1}$ as follows:
    \[b_r=\begin{cases}
    a_r & \mbox{ if }j\notin\{a_1,\ldots ,a_{k-1}\} \mbox{ or }a_r<j \\
    a_{r+1} & \mbox{ if }j\in\{a_1,\ldots ,a_{k-1}\} \mbox{ and }a_r\geq j
     \end{cases}\]
In the support of the S-polynomial $S(g_{0j},g_{0b_1})\in K[X_1,\ldots ,X_n]$, there will be $X_a^2 X_{b_1}$. Since the polynomials $g_{\sigma}$, where $\sigma\in \mnf(\Delta)$ form a Gr\"obner basis, $S(g_{0j},g_{0b_1})$ must reduce to zero modulo them. By the properties listed in Step 1, 
 the only way for $X_a^2 X_{b_1}$ to be canceled is if there exists a minimal non-face $\sigma_1\subseteq [n]$ of $\Delta$ which is not in $A_0$ and  so that $X_a X_{b_1}$ (or $X_a^2X_{b_1}$, depending on whether $|\sigma_1|=2$ or $|\sigma_1|=3$) belongs to $\supp(g_{\sigma-1})$. We perform a change of variables of the form 
 \[X_{b_1}\mapsto X_{b_1}-\sum_{r=2}^{k-1}\frac{\mu_{1r}}{\mu_{11}}X_{b_r}\]
 where $X_aX_{b_r}$ (or $X_a^2X_{b_r}$) occurs in $g_{\sigma_1}$ with coefficient $\mu_{1r}\in K$. As before, this change of variables preserves $\init_<(I)$. We $0$-reduce the new Gr\"obner basis, note that all the properties listed at the end of Step 1 are preserved and that, calling $A_1=A_0\cup \{\sigma_1\}$:
 
 \[(*) \ \ \ \mbox{For } r\in \{2,\ldots ,k-1\}, \ X_aX_{b_r},X_a^2X_{b_r}\notin\supp(g_{\sigma}) \ \forall \ \sigma\in A_1.\]  
 \smallskip
 
Next, in the support of the S-polynomial $S(g_{0j},g_{0b_2})\in K[X_1,\ldots ,X_n]$, there will be $X_a^2 X_{b_2}$. Reducing it to zero modulo $G$, as before the only way for $X_a^2 X_{b_2}$ to be canceled is if there exists a minimal non-face $\sigma_2\subseteq [n]$ of $\Delta$ that is not in $A_1$, and  so that $X_a X_{b_2}$ (or $X_a^2X_{b_2}$, depending on $|\sigma_2|$) belongs to $\supp(g_{\sigma_2})$. We do a change of variables of the form 
 \[X_{b_2}\mapsto X_{b_2}-\sum_{r=3}^{k-1}\frac{\mu_{2r}}{\mu_{22}}X_{b_r}\]
where $X_aX_{b_r}$ (or $X_a^2X_{b_r}$) occurs in $g_{\sigma_2}$ with coefficient $\mu_{2r}\in K$. As before, this change of variables preserves $\init_<(I)$. We $0$-reduce the new Gr\"obner basis, note that all the properties listed at the end of Step 1 are preserved and that, calling $A_2=A_1\cup \{\sigma_2\}$:
 
 \[(**) \ \ \ \mbox{For } r\in \{3,\ldots ,k-1\}, \ X_aX_{b_r},X_a^2X_{b_r}\notin\supp(g_{\sigma}) \ \forall \ \sigma\in A_2.\] 
 
\smallskip

We can continue this way to get a subset $A_{k-1}=A_0\cup\{\sigma_1,\sigma_2, \ldots ,\sigma_{k-1}\}\subseteq \mnf(\Delta')$ of cardinality $n-2$ and a Gr\"obner basis $\{g_{\sigma}:\sigma\in\mnf(\Delta)\}$ with the following properties:.

\begin{enumerate}
\item For all $\{a , c\} \notin \Delta$, the only monomial divisible by $X_a$ in the support of $g_{ac}$ is $X_aX_c$.
\item For all $r=1,\ldots ,k-1$, the only monomials of degree 2 divisible by $X_a$ or of degree 3 divisible by $X_a^2$ potentially in the support of $g_{\sigma_r}$ are $X_aX_{b_j}$ or $X_a^2X_{b_j}$ (depending on whether $|\sigma_r|=2$ or $|\sigma_r|=3$) with $j\leq r$, and $X_aX_{b_r}$ or $X_a^2X_{b_r}$ belong for sure in the support of $g_{\sigma_r}$.
\item $X_a^2,X_a^3\notin \supp(g_{\sigma})$ for any minimal non-face $\sigma\subseteq [n]$ of $\Delta$.
\end{enumerate}

\medskip

{\it Step 3}: Consider the Jacobian matrix of $I'=(g_{\sigma}:\sigma\in\mnf(\Delta')\})=I\cap K[X_1,\ldots ,X_n]$ with rows indexed by the minimal non-faces $\sigma\subseteq [n]$ of $\Delta$ and with columns $1,\ldots ,n$. Choose the minor corresponding to the rows $g_{\sigma}$ with $\sigma\in A_{k-1}$ and the columns corresponding to the variables $X_c$ and $X_{b_r}$ for $r=1,\ldots ,k-1$. Such a minor can be arranged into a lower triangular matrix of size $(n-2) \times (n-2)$ with nonzero entries on the diagonal. This contradicts the fact that, by the inductive hypothesis on $\ell(\Delta)$, since $\ell(\Delta')=\ell(\Delta)-1<n$, $P'_a$ is a singular point in the projection $X'$. 
\end{proof}

\begin{corollary}\label{c:genus}
A connected projective Herzog curve, smooth over a field, has genus 0. Equivalently, a graph that is Gr\"obner smoothable over some field must be a tree. 
\end{corollary}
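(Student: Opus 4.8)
The plan is to read Corollary~\ref{c:genus} off from Theorem~\ref{thm:main} together with Proposition~\ref{pequiv}, after reducing to the running assumptions of Section~2. So let $X=\VV(I)\subseteq\PP^n$ be a connected Herzog projective curve, smooth over $K$, with $\init_<(I)=I_\Delta$. First I would normalize the setup. By \cite{Va09} (as recalled in the Remark after Conjecture~\ref{maincintro}) one has $H^0(X,\O_X)=K$, hence $X$ is geometrically connected; therefore replacing $K$ by $\overline K$ preserves connectedness and smoothness, leaves $\init_<(I)=I_\Delta$ unchanged, and does not alter $\dim_KH^i(X,\O_X)$ for any $i$ by flat base change. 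Using the reduction lemma that follows Conjecture~\ref{conj:main}, I would further assume that $I$ contains no linear form, i.e. that every $v\in[n]_0$ is a vertex of $\Delta$; since this only replaces $(S,I)$ by an isomorphic quotient ring, $X$ and its genus are unchanged. Finally, $\dim X=1$ gives $\dim S/I_\Delta=2$, so $\Delta$ is a graph. Thus assumptions (1)--(3) of Section~2 hold.

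Now Theorem~\ref{thm:main} applies directly: since $X$ is a nonsingular connected curve, $\Delta$ is a tree. A tree on the vertex set $[n]_0$ is connected and has no cycle, so $\widetilde H^0(\Delta;K)=\widetilde H^1(\Delta;K)=0$; as $\dim X=1$, this is exactly the statement that $\Delta$ is acyclic over $K$, i.e. condition (4) of Proposition~\ref{pequiv}. Since $X$ is smooth over $K$, Proposition~\ref{pequiv} then yields condition (5), namely $H^0(X,\O_X)=K$ and $H^1(X,\O_X)=0$. For a smooth connected projective curve the genus is $\dim_KH^1(X,\O_X)$, so $X$ has genus $0$; this is the first assertion, and (via the definition of Herzog projective variety) also Theorem~\ref{maintintro}.

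For the equivalent graph-theoretic formulation: if a graph $\Gamma$ on $[n]_0$ is Gr\"obner smoothable over some field $K$, pick $J\subseteq S$ and a monomial order with $\init_<(J)=I_\Gamma$ and $\VV(J)\subseteq\PP^n$ connected and smooth over $K$. Then $\dim S/I_\Gamma=2$ forces $\VV(J)$ to be a curve, and it is a Herzog projective curve whose associated complex is $\Gamma$, so the argument above (applied with $I=J$, $\Delta=\Gamma$), or Theorem~\ref{thm:main} itself, shows $\Gamma$ is a tree; and a connected Herzog projective curve attached to a non-tree would contradict Theorem~\ref{thm:main}, so the two formulations coincide. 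I do not expect a genuine obstacle: the entire difficulty lies in Theorem~\ref{thm:main}, and the only points needing care are the passage to an algebraically closed field (which requires $X$ geometrically connected, guaranteed by $H^0(X,\O_X)=K$ via \cite{Va09}, so that connectedness is not lost) and the routine identification of ``genus~$0$'' with condition~(5) of Proposition~\ref{pequiv}.
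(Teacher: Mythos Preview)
Your proposal is correct and follows exactly the route the paper intends: the corollary is stated without proof because it is meant to be read off from Theorem~\ref{thm:main} together with Proposition~\ref{pequiv}, after the reductions of Section~2, and you have filled in those details accurately. The only refinement worth noting is that your care in invoking \cite{Va09} to ensure geometric connectedness before passing to $\overline{K}$ is precisely the point the paper flags in the Remark following Conjecture~\ref{maincintro}, so nothing is missing.
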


%

\begin{example}
Let $I\subseteq S$ the ideal of 2-minors of the matrix
\[\begin{pmatrix}
X_0 & X_1 & X_2 & \cdots & X_{n-1} \\
X_1 & X_2 & X_3 & \cdots & X_n
\end{pmatrix}.\]
Then $X=\VV(I)\subseteq\PP^n$ is a rational normal curve of degree $n$. If $<$ is the lexicographic order with $X_0>X_1>\cdots > X_n$, then $\init_<(I)=(X_iX_{j+1}:0\leq i<j\leq n-1)$, so $\init_<(I)=I_{\Delta}$, where $\Delta$ is a path of length $n$. It is not difficult to check that all the different initial ideals of $I$ are not squarefree, see \cite[Theorem 4.9]{CDR} for a description of all possible Cohen-Macaulay initial ideals of $I$. Since the rational normal curve is the only normally embedded projective curve smooth over an algebraically closed field of genus 0, exploiting Corollary \ref{c:genus}, one might think that the only Gr\"obner smoothable graphs are the paths. 

Unfortunately, this is not true, because initial ideals depend on changes of coordinates. As an example, consider the ideal $J\subseteq K[X_0,X_1,X_2,X_3]$  generated by the $2 \times 2$-minors of the matrix
\[\begin{pmatrix}
X_1+X_2+X_3 & X_1+X_3 & X_1 \\
X_1+X_3 & X_1 & X_0+X_1
\end{pmatrix}.\]
As one can check, for any monomial order with $X_0>X_1>X_2>X_3$ we have $\init_<(J)=(X_0X_1,X_0X_2,X_1X_2)$ (over any field $K$), i.e. $\init_<(J)=I_{\Delta}$ where $\Delta$ the tree with edges $03,13,23$--namely, a star.

We do not know which trees are Gr\"obner smoothable. Also, whether a Gr\"obner smoothing exists may depend, in principle, on the field $K$.
\end{example}

\bibliographystyle{acm}
\bibliography{AJME}
\end{document}